\documentclass[11pt]{article}

\usepackage[T1]{fontenc}
\usepackage[utf8]{inputenc}
\usepackage{lmodern}
\usepackage{amsmath,amssymb,amsthm,amscd,mathtools,mathrsfs}
\usepackage{microtype}
\usepackage[margin=1.08in]{geometry}
\usepackage{enumitem}
\usepackage{aliascnt}
\usepackage[hidelinks]{hyperref}

\theoremstyle{definition}
\newtheorem{theorem}{Theorem}[section]
\newaliascnt{proposition}{theorem}
\newtheorem{proposition}[proposition]{Proposition}
\aliascntresetthe{proposition}
\newaliascnt{lemma}{theorem}
\newtheorem{lemma}[lemma]{Lemma}
\aliascntresetthe{lemma}
\newaliascnt{corollary}{theorem}
\newtheorem{corollary}[corollary]{Corollary}
\aliascntresetthe{corollary}
\newaliascnt{remark}{theorem}

\aliascntresetthe{remark}

\setlist[enumerate]{itemsep=2pt,topsep=5pt,parsep=0pt,partopsep=0pt}
\title{Lazard's Realization Problem for \(N\)-Series:\\ Bar Obstructions and Finite-Quotient Descent}
\author{Chao Ma\thanks{Independent Researcher, London, United Kingdom.
Email: \href{mailto:raymond.ma@me.com}{\texttt{raymond.ma@me.com}}.
ORCID: \href{https://orcid.org/0009-0004-2456-9098}
{0009-0004-2456-9098}.}}
\date{}

\hypersetup{
  pdftitle={Lazard's Realization Problem for N-Series: Bar Obstructions and Finite-Quotient Descent},
  pdfauthor={Chao Ma},
  pdfcreator={Chao Ma},
  pdfsubject={Lazard's question on recovering a prescribed N-series from a multiplicative filtration of the integral group ring, the bar-cokernel obstruction, and failure of descent at the fourth dimension subgroup},
  pdfkeywords={N-series, augmentation filtration, dimension subgroup, integral group ring, Lazard, normalized bar complex, cokernel obstruction, Losey expression, finite-quotient descent, fourth dimension subgroup, Tahara, Hartl-Mikhailov-Passi, group ring filtration}
}

\begin{document}
\setcounter{section}{-1}
\maketitle

\begin{abstract}
Lazard asked which \(N\)-series of a group arise from multiplicative
filtrations of the integral group ring.  Given an \(N\)-series
\(H_\bullet\) of a group \(G\), let \(A_\bullet\) be the filtration of the
augmentation ideal spanned by products of elements \(x-1\), \(x\in H_a\),
weighted by \(a\).  Every realizing filtration contains \(A_\bullet\), so
\(H_\bullet\) is realizable exactly when the maps
\(H_n/H_{n+1}\to A_n/A_{n+1}\), \(h\mapsto h-1\), are injective.  We show
that each kernel is the cokernel of an explicit map of normalized bar
groups; for a finite group this makes realizability
decidable by integer linear algebra.  For a profinite group with a cofinal
tower of finite quotients and an \(N\)-series induced from it, suppose \(h\in H_n\) and \(h-1\in A_{n+1}\) in
every finite quotient.  Then \(h-1\in A_{n+1}\) in the discrete group ring
exactly when the widths of its finite-level Losey expressions are bounded.
This descent can fail.  Using Tahara's class-three \(2\)-groups and the
Hartl--Mikhailov--Passi description of the fourth dimension subgroup, we
construct a countable product \(P\) of finite \(2\)-groups.  It contains an
element of \(\gamma_3(P)\) that lies in the fourth dimension subgroup of
every finite subproduct of \(P\) but not in \(D_4(P)\).
\end{abstract}

\section{Introduction}\label{introduction-and-main-results}

An \(N\)-series of a group \(G\), in the sense of Lazard~\cite{Lazard1954},
is a descending chain
\[
G=H_1\ge H_2\ge\cdots,\qquad
[H_a,H_b]\le H_{a+b}.
\tag{0.1}
\]
Lazard's theorem \cite[Ch.~I, Th\'eor\`eme~3.2]{Lazard1954} attaches to a
filtered associative algebra \(R\) and a homomorphism \(f:G\to R^\times\) with \(v(f(x)-1)\ge1\) the \(N\)-series
\(H_i=\{x:v(f(x)-1)\ge i\}\).  In Chapter~I, \S8 he asks which
\(N\)-series of a given group arise in this way.  For \(R=\mathbf Z[G]\) this
is the question studied by Losey \cite[\S1]{Losey1974}, who showed that the
answer is always positive when \(G\) is abelian
\cite[Theorem~2]{Losey1974}; for the shifted congruence series of the
special Nottingham jet groups the answer is also positive
\cite{Ma2026NottinghamAugmentation}.  The weighted
augmentation filtration \(A_\bullet\) of Section~\ref{weighted-augmentation-filtration}
gives maps
\[
\phi_n:H_n/H_{n+1}\longrightarrow A_n/A_{n+1},
\qquad
hH_{n+1}\longmapsto(h-1)+A_{n+1}.
\tag{0.2}
\]
For the lower central series \(H_n=\gamma_n\), injectivity of these maps is
the dimension subgroup problem.  Sj\"ogren proved that \(D_n(G)/\gamma_n(G)\) is
annihilated by an explicit integer depending only on \(n\)
\cite{Sjogren1979}; Bartholdi and Mikhailov showed that every finite abelian
group occurs in such a quotient \cite{BartholdiMikhailov2020}, and the
dimension quotients are now described in all degrees as boundary limits over
the category of presentations \cite{Mikhailov2026}.  Lazard's question concerns an arbitrary prescribed \(N\)-series.

Lazard's realization problem asks when these maps are injective, or
equivalently when \(G\cap(1+A_n)=H_n\) for every \(n\)
(Proposition~\ref{proposition-1.1-equivalence-with-lazards-dimension-equality}
and Corollary~\ref{corollary-1.2-canonical-minimality}).  One obstruction is
intrinsic to the integral group ring.  A second appears only on passing from
finite quotients back to \(\mathbf Z[G]\): membership may hold at every
finite level without a single discrete expression witnessing it.  The main
results are the following.

\begin{enumerate}
\def\labelenumi{\arabic{enumi}.}
\item
  In every degree, \(\ker\phi_n\) is the cokernel of an explicit
  relative-bar map (Theorem~\ref{theorem-3.2-relative-bar-realization}).
\item
  For a finite separated \(N\)-series, the vanishing of these kernels is
  decided by integer linear algebra
  (Theorem~\ref{theorem-4.1-finite-criterion}).
\item
  For an \(N\)-series induced from a countable cofinal tower of finite
  quotients, discrete descent holds
  if and only if the finite-level widths of Losey expressions are bounded
  (Theorem~\ref{theorem-6.1-discrete-descent-bounded-width}).
\item
  Discrete descent fails in general
  (Theorem~\ref{theorem-8.3-failure-of-discrete-descent}): a countable
  product \(P\) of four-generated class-three finite \(2\)-groups contains an
  element
  \[
  h\in D_4^{\rm fin}(P)\setminus D_4(P).
  \tag{0.3}
  \]
\end{enumerate}

\section{The canonical filtration and the bar obstruction}\label{definitions}

\subsection{Weighted augmentation
filtration}\label{weighted-augmentation-filtration}

Let \(\mathbf Z[G]\) be the integral group ring and
\(\Delta(G)=\ker(\mathbf Z[G]\to\mathbf Z)\) its augmentation ideal. For
\(m\ge1\), define \(A_m=A_m(G,H_\bullet)\) to be the additive subgroup
generated by all products
\[
(x_1-1)\cdots(x_r-1),
\qquad
x_\nu\in H_{a_\nu},
\qquad
\sum_{\nu=1}^r a_\nu\ge m.
\tag{1.1}
\]
The \(N\)-series identities imply
\[
A_aA_b\subseteq A_{a+b},
\qquad
H_m-1\subseteq A_m.
\tag{1.2}
\]
Set
\[
D_m^{\rm alg}(G,H_\bullet)
:=
G\cap(1+A_m).
\tag{1.3}
\]
The map in (0.2) is a homomorphism, since for \(h,k\in H_n\),
\[
hk-1=(h-1)+(k-1)+(h-1)(k-1),
\]
and the last term lies in \(A_{2n}\subseteq A_{n+1}\). Replacing \(h\)
by an element of the same coset modulo \(H_{n+1}\) changes \(h-1\) by an
element of \(A_{n+1}\), so the induced map on \(H_n/H_{n+1}\) is also
well defined.

The subgroup \(A_m\) is a two-sided ideal of \(\mathbf Z[G]\). If
\(y=(x_1-1)\cdots(x_r-1)\) has total weight at least \(m\) and
\(g\in G=H_1\), then
\[
gy=y+(g-1)y,
\qquad
yg=y+y(g-1).
\tag{1.3a}
\]
Both added summands have total weight at least \(m+1\); since \(A_m\) is
additively generated by such products, it is an ideal.

By definition of \(\phi_n\),
\[
\ker\phi_n
=
\frac{H_n\cap D_{n+1}^{\rm alg}}{H_{n+1}},
\tag{1.4}
\]
that is, \(hH_{n+1}\in\ker\phi_n\) if and only if \(h-1\in A_{n+1}\).
We say that \(A_\bullet\) realizes \(H_\bullet\) when every \(\phi_n\) is
injective.  For the lower central series we write
\[
D_m(G):=G\cap\bigl(1+\Delta(G)^m\bigr)
\tag{1.4$'$}
\]
for the ordinary integral dimension subgroup.

\begin{proposition}[Equivalence with Lazard's dimension equality]
\label{proposition-1.1-equivalence-with-lazards-dimension-equality}

One has
\[
D_n^{\rm alg}(G,H_\bullet)=H_n\text{ for every }n
\iff
\phi_n\text{ is injective for every }n.
\tag{1.4a}
\]
\end{proposition}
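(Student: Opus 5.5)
The plan is to prove both implications directly from (1.2) and (1.4), inducting on \(n\) for the reverse direction. First record that \(H_m\subseteq D_m^{\rm alg}(G,H_\bullet)\) always holds, since \(H_m-1\subseteq A_m\) by (1.2). Also record that both the \(H_m\) and the \(D_m^{\rm alg}\) are descending in \(m\); the latter because \(A_{m+1}\subseteq A_m\). So the content of the equality \(D_n^{\rm alg}=H_n\) is the reverse inclusion \(D_n^{\rm alg}\subseteq H_n\).

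For the forward direction, assume \(D_m^{\rm alg}=H_m\) for all \(m\). Then (1.4) gives
\[
\ker\phi_n=\frac{H_n\cap D_{n+1}^{\rm alg}}{H_{n+1}}=\frac{H_n\cap H_{n+1}}{H_{n+1}}=1,
\]
so every \(\phi_n\) is injective. Nothing more is needed here.

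For the reverse direction, assume every \(\phi_n\) is injective, which by (1.4) means \(H_n\cap D_{n+1}^{\rm alg}=H_{n+1}\) for all \(n\). I will show \(D_n^{\rm alg}\subseteq H_n\) by induction on \(n\).

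1. The base case \(n=1\) is immediate. Every \(g\in G\) satisfies \(g-1\in\Delta(G)\), and \(\Delta(G)\) is additively generated by the elements \(x-1\) with \(x\in G=H_1\), so \(\Delta(G)=A_1\). Hence \(D_1^{\rm alg}=G=H_1\).
2. For the inductive step, suppose \(D_n^{\rm alg}\subseteq H_n\) and let \(g\in D_{n+1}^{\rm alg}\). Since \(D_{n+1}^{\rm alg}\subseteq D_n^{\rm alg}\subseteq H_n\), we have \(g\in H_n\cap D_{n+1}^{\rm alg}\).
3. Injectivity of \(\phi_n\) gives \(H_n\cap D_{n+1}^{\rm alg}=H_{n+1}\), so \(g\in H_{n+1}\). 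This completes the induction.

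There is no real obstacle. The only points requiring care are the descending property \(A_{m+1}\subseteq A_m\), which holds since weight \(\ge m+1\) implies weight \(\ge m\), and the identification \(A_1=\Delta(G)\) in the base case. Both are already implicit in the definitions of §\ref{weighted-augmentation-filtration}.
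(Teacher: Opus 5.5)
Your proof is correct and is essentially the paper's argument: the paper takes the largest \(s<n\) with \(x\in H_s\setminus H_{s+1}\) and uses \(A_n\subseteq A_{s+1}\) to put the nonzero class \(xH_{s+1}\) in \(\ker\phi_s\), which is your induction on \(n\) run as a minimal-counterexample argument. Your base case needs only the trivial inclusion \(D_1^{\rm alg}\subseteq G=H_1\), so the identification \(A_1=\Delta(G)\) is harmless but unnecessary.
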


\begin{proof}
If \(D_{n+1}^{\rm alg}=H_{n+1}\), formula (1.4) gives \(\ker\phi_n=0\).

For the converse, suppose every \(\phi_s\) is injective and
\(x\in D_n^{\rm alg}\). If
\(x\notin H_n\), choose \(s<n\) with
\[
x\in H_s\setminus H_{s+1}.
\]
Since \(A_n\subseteq A_{s+1}\), the nonzero class \(xH_{s+1}\) lies in
\(\ker\phi_s\), a contradiction. Hence \(D_n^{\rm alg}\subseteq H_n\). The
reverse inclusion follows from \(H_n-1\subseteq A_n\).
\end{proof}

\begin{corollary}[Canonical minimality]
\label{corollary-1.2-canonical-minimality}

Let \(I_\bullet\) be any descending multiplicative augmentation
filtration by two-sided ideals such that
\[
I_aI_b\subseteq I_{a+b},
\qquad
H_a-1\subseteq I_a.
\tag{1.4b}
\]
Then \(A_n\subseteq I_n\) for every \(n\). Hence there exists
such a filtration satisfying
\[
G\cap(1+I_n)=H_n\quad\text{for all }n
\tag{1.4c}
\]
if and only if the canonical filtration itself satisfies
\[
G\cap(1+A_n)=H_n\quad\text{for all }n.
\tag{1.4d}
\]
\end{corollary}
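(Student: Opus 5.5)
The plan is to prove the inclusion \(A_n\subseteq I_n\) first, directly from the generators in (1.1), and then to deduce the equivalence by a sandwich argument. For the inclusion, take a generator \(y=(x_1-1)\cdots(x_r-1)\) of \(A_n\) with \(x_\nu\in H_{a_\nu}\) and \(\sum a_\nu\ge n\). By the second condition in (1.4b), each factor \(x_\nu-1\) lies in \(I_{a_\nu}\). Iterating \(I_aI_b\subseteq I_{a+b}\) then places \(y\) in \(I_{a_1+\cdots+a_r}\). Because the filtration is descending and \(\sum a_\nu\ge n\), this gives \(y\in I_n\). Since \(I_n\) is an additive subgroup and \(A_n\) is additively generated by such \(y\), we conclude \(A_n\subseteq I_n\). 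The only point needing care is indexing. Weights \(a_\nu\ge1\) always (as \(H_1=G\)), so \(r\ge1\) products are covered. I will also read ``descending'' as \(I_m\supseteq I_{m'}\) for \(m\le m'\), which is what handles the case \(\sum a_\nu>n\).

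For the equivalence, the direction ``\(\Leftarrow\)'' is immediate. The canonical filtration \(A_\bullet\) is itself an admissible \(I_\bullet\): it consists of two-sided ideals (shown after (1.3a)), it is descending, and it satisfies (1.2). It lies in the augmentation ideal because each generator does. So if (1.4d) holds, \(A_\bullet\) is a filtration satisfying (1.4c).

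For ``\(\Rightarrow\)'', suppose some admissible \(I_\bullet\) satisfies (1.4c). I will sandwich \(G\cap(1+A_n)\) between \(H_n\) on both sides:
\[
H_n\subseteq G\cap(1+A_n)\subseteq G\cap(1+I_n)=H_n .
\]
The first inclusion is \(H_n-1\subseteq A_n\) from (1.2), and the second follows from \(A_n\subseteq I_n\). Equality therefore holds throughout, which is (1.4d). By Proposition~\ref{proposition-1.1-equivalence-with-lazards-dimension-equality}, this is in turn equivalent to injectivity of every \(\phi_n\).

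There is no real obstacle here. The one step to state explicitly is that \(A_\bullet\) qualifies as an admissible filtration, i.e. is descending by two-sided ideals, so that the ``if'' direction is legitimate. That fact is already established in the text after (1.3a).
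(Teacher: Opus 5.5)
Your proposal is correct and follows the paper's proof essentially verbatim. You show $A_n\subseteq I_n$ generator by generator using (1.4b), prove ``$\Rightarrow$'' with the sandwich $H_n\subseteq G\cap(1+A_n)\subseteq G\cap(1+I_n)=H_n$, and get ``$\Leftarrow$'' by taking $I_\bullet=A_\bullet$.
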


\begin{proof}
Every generator of \(A_n\) is a product of terms
\(x_\nu-1\in I_{a_\nu}\) of total weight at least \(n\), so (1.4b) gives
\(A_n\subseteq I_n\). If (1.4c) holds, then
\[
H_n\subseteq G\cap(1+A_n)
\subseteq G\cap(1+I_n)=H_n.
\tag{1.4e}
\]
The converse follows by taking \(I_\bullet=A_\bullet\).
\end{proof}

\subsection{The central extension and its
transgression}\label{quotient-edge-and-transgression}

Fix \(n\ge1\) and put
\[
E_n=G/H_{n+1},\qquad
Q_n=G/H_n,\qquad
C_n=H_n/H_{n+1}.
\tag{1.5}
\]
Then
\[
1\longrightarrow C_n\longrightarrow E_n
\xrightarrow{\pi_n}Q_n\longrightarrow1
\tag{1.6}
\]
is a central extension. Its integral homology five-term exact sequence
contains
\[
H_2(E_n;\mathbf Z)\longrightarrow H_2(Q_n;\mathbf Z)
\xrightarrow{\kappa_n}C_n
\longrightarrow H_1(E_n;\mathbf Z)
\longrightarrow H_1(Q_n;\mathbf Z)\longrightarrow0.
\tag{1.7}
\]
The map \(\kappa_n\) is the central transgression; its sign is fixed by
the factor-set formula (1.23).

\subsection{Normalized bar complex}\label{normalized-bar-complex}

For a group \(Q\), let \(\overline C_r(Q)\) be the free abelian group on
symbols
\[
[q_1|\cdots|q_r],\qquad q_i\ne1.
\tag{1.8}
\]
Terms containing an identity are interpreted as zero. With trivial
integral coefficients the boundary is
\[
\begin{aligned}
\partial[q_1|\cdots|q_r]
={}&[q_2|\cdots|q_r]\\
&+\sum_{i=1}^{r-1}(-1)^i
[q_1|\cdots|q_iq_{i+1}|\cdots|q_r]\\
&+(-1)^r[q_1|\cdots|q_{r-1}].
\end{aligned}
\tag{1.9}
\]
Write
\[
Z_2(Q)=\ker(\partial:\overline C_2(Q)\to\overline C_1(Q)),
\qquad
B_2(Q)=\operatorname{im}(\partial:\overline C_3(Q)\to\overline C_2(Q)).
\tag{1.10}
\]
Then \(H_2(Q;\mathbf Z)=Z_2(Q)/B_2(Q)\).

\subsection{The factor-set bar obstruction}\label{factor-set-bar-obstruction}

By Lemma~\ref{lemma-2.2-kernel-transgression} below, which applies
Losey's theorem after abelianization,
\[
\ker\phi_n\subseteq\operatorname{im}\kappa_n.
\tag{1.11}
\]
Choose a normalized set-theoretic section
\[
s_n:Q_n\longrightarrow E_n.
\tag{1.12}
\]
Its central factor set is
\[
\alpha_n(q,q')
:=s_n(q)s_n(q')s_n(qq')^{-1}\in C_n.
\tag{1.13}
\]
Define a homomorphism
\[
\vartheta_{n,s}:\overline C_2(Q_n)
\longrightarrow A_n/A_{n+1}
\tag{1.14}
\]
on normalized bar generators by
\[
\vartheta_{n,s}([q\mid q'])
=
(\widetilde\alpha_n(q,q')-1)+A_{n+1},
\tag{1.15}
\]
where \(\widetilde\alpha_n(q,q')\in H_n\) represents
\(\alpha_n(q,q')\in H_n/H_{n+1}\). The value is independent of the
representative: replacing \(\widetilde\alpha_n(q,q')\) by
\(\widetilde\alpha_n(q,q')h\), with \(h\in H_{n+1}\), changes the
representative of (1.15) by
\[
\widetilde\alpha_n(q,q')(h-1)\in A_{n+1}.
\tag{1.16}
\]
The factor set satisfies
\[
\alpha_n(q,q')\alpha_n(qq',q'')
=
\alpha_n(q',q'')\alpha_n(q,q'q'').
\tag{1.17}
\]
For a normalized three-chain generator,
\[
\partial[q\mid q'\mid q'']
=
[q'\mid q'']-[qq'\mid q'']
+[q\mid q'q'']-[q\mid q'].
\tag{1.18}
\]
Since the map \(C_n\to A_n/A_{n+1}\),
\(c\mapsto(\widetilde c-1)+A_{n+1}\), is \(\phi_n\), applying it to
(1.17) gives
\[
\begin{aligned}
\vartheta_{n,s}\partial[q\mid q'\mid q'']
={}&\phi_n(\alpha_n(q',q''))
-\phi_n(\alpha_n(qq',q''))\\
&+\phi_n(\alpha_n(q,q'q''))
-\phi_n(\alpha_n(q,q'))
=0.
\end{aligned}
\tag{1.19}
\]
Therefore \(\vartheta_{n,s}\) vanishes on \(B_2(Q_n)\), and its restriction
to cycles induces
\[
\overline\vartheta_n:H_2(Q_n;\mathbf Z)
\longrightarrow A_n/A_{n+1}.
\tag{1.20}
\]
To verify independence of the section, write
\(s'_n(q)=t(q)s_n(q)\), where \(t(q)\in C_n\) and
\(t(1)=1\). Centrality of \(C_n\) gives
\[
\alpha'_n(q,q')
=t(q)t(q')t(qq')^{-1}\alpha_n(q,q').
\tag{1.21}
\]
If \(\lambda([q])=\phi_n(t(q))\), then on normalized two-chains
\[
\vartheta_{n,s'}-\vartheta_{n,s}
=\lambda\partial_2,
\qquad
(\vartheta_{n,s'}-\vartheta_{n,s})(z)=0
\quad(z\in Z_2(Q_n)).
\tag{1.22}
\]
Let \(z=\sum m_{q,q'}[q\mid q']\) be a normalized two-cycle and
lift each two-simplex through \(s_n\). Its failure to close in \(E_n\) is
the kernel element \(\alpha_n(q,q')\); because \(z\) is a cycle, the
lifted one-dimensional faces cancel. With the chosen sign convention, the
transgression is
\[
\kappa_n([z])
=\prod_{q,q'}\alpha_n(q,q')^{m_{q,q'}}\in C_n.
\tag{1.23}
\]
Applying \(\phi_n\) to (1.23) and using (1.15) yields
\[
\overline\vartheta_n=\phi_n\kappa_n.
\tag{1.24}
\]
Define
\[
F_n:=\ker\overline\vartheta_n
\subseteq H_2(Q_n;\mathbf Z).
\tag{1.25}
\]
Let
\[
q_n:Z_2(Q_n)\twoheadrightarrow H_2(Q_n;\mathbf Z)
\tag{1.26}
\]
be the quotient map and put
\[
D_n^{\rm bar}:=q_n^{-1}(F_n)
=\ker\bigl(\vartheta_{n,s}|_{Z_2(Q_n)}\bigr).
\tag{1.27}
\]

\subsection{Finite quotient
saturation}\label{finite-quotient-saturation}

Let
\[
G=\varprojlim_{j\ge1}G_j
\tag{1.28}
\]
be a separated countably based profinite group with a fixed
descending cofinal tower of finite quotients and surjective transition
maps. Assume that the \(N\)-series is closed and induced from the finite
quotients, so that
\[
H_a=\varprojlim_jH_{a,j},
\qquad
H_{a,j}=q_j(H_a),
\tag{1.28a}
\]
with the transition maps carrying \(H_{a,j+1}\) onto \(H_{a,j}\). Let
\(A_{m,j}\subseteq\mathbf Z[G_j]\) be defined by (1.1). Every weighted
generator of \(A_{m,j}\) lifts factor by factor through
\(H_{a,j+1}\twoheadrightarrow H_{a,j}\), so the transition map of group
rings carries \(A_{m,j+1}\) onto \(A_{m,j}\).

The canonical map is
\[
\delta:\mathbf Z[G]\longrightarrow
\varprojlim_j\mathbf Z[G_j].
\tag{1.29}
\]
Define
\[
A_m^{\rm fin}
:=
\delta^{-1}\!\left(\varprojlim_jA_{m,j}\right)
\tag{1.30}
\]
and
\[
D_m^{\rm fin}
:=
G\cap(1+A_m^{\rm fin})
=
\bigcap_jq_j^{-1}\!\left(G_j\cap(1+A_{m,j})\right).
\tag{1.31}
\]

\section{Preliminaries}\label{preliminaries}

We use the following results of Losey, Hartl--Mikhailov--Passi, and Tahara.

\begin{theorem}[Losey {\cite[Theorem~2]{Losey1974}}]
\label{theorem-2.1-loseys-abelian-injectivity-theorem}

If \(V\) is abelian and \(V=V_1\ge V_2\ge\cdots\) is a descending
subgroup series, the canonical filtration satisfies
\[
V\cap(1+A_n(V,V_\bullet))=V_n
\quad\text{for every }n.
\tag{2.1}
\]
The induced maps
\[
\phi_n^V:V_n/V_{n+1}\longrightarrow
A_n(V,V_\bullet)/A_{n+1}(V,V_\bullet)
\tag{2.1a}
\]
are injective in every degree.
\end{theorem}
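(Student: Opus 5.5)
The plan is to construct a splitting: a ring homomorphism, or at least an additive map, out of \(\mathbf Z[V]\) into a graded object that detects each quotient \(V_n/V_{n+1}\) and kills \(A_{n+1}\). By Proposition~\ref{proposition-1.1-equivalence-with-lazards-dimension-equality}, (2.1) and injectivity of the \(\phi^V_n\) are equivalent, so only the injectivity needs proof.

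Fix \(n\) and put \(W=V/V_{n+1}\). The canonical filtration is functorial for surjections carrying the series onto the induced series, so it suffices to work with \(W\) and the finite series \(W_a=V_a/V_{n+1}\), where \(W_{n+1}=1\).

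The first step is to pass to the associated graded abelian group \(\operatorname{gr}W=\bigoplus_{a\le n}W_a/W_{a+1}\). Let \(S=\operatorname{Sym}_{\mathbf Z}\) or a divided-power-type algebra on \(\operatorname{gr}W\), graded by weight. The goal is a multiplicative map \(\Psi\) from \(\mathbf Z[W]\) to a filtered commutative ring \(R\) with the following two properties:
\begin{enumerate}
\item For \(x\in W_a\), \(\Psi(x)-1\) has filtration \(\ge a\), with leading term the class of \(x\) in \(W_a/W_{a+1}\) placed in degree \(a\).
\item The degree-\(n\) linear part is detectable.
\end{enumerate}
If this is achieved, then \(A_{n+1}\) maps into filtration \(\ge n+1\) while \(h-1\) with \(h\in W_n\setminus 1\) has a nonzero degree-\(n\) component. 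That gives injectivity.

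Concretely, I would first reduce to \(W\) finitely generated. A given \(h\) and an expression of \(h-1\) in \(A_{n+1}\) involve only finitely many elements, and the filtration restricts to a subgroup. Then I would choose compatible bases adapted to the filtration, using Smith normal form successively on \(W_n\le W_{n-1}\le\cdots\).

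The main obstacle is that the extensions \(0\to W_{a+1}\to W_a\to W_a/W_{a+1}\to0\) need not split, and torsion prevents a naive exponential \(x\mapsto e^{\text{class}}\). To handle this I would take \(R=\mathbf Z[W]\) itself with the ideal filtration \(A_\bullet\), and compute \(A_m\) explicitly. Since \(W\) is abelian, \(\mathbf Z[W]\) is commutative, and \(A_m\) is spanned by the products \((x_1-1)\cdots(x_r-1)\) of weight \(\ge m\). The aim is to construct a homomorphism \(\lambda:A_n\to W_n\) with \(\lambda(A_{n+1})=0\) and \(\lambda(h-1)=h\), a logarithm-type map in the spirit of the map \(\Delta\to\Delta/\Delta^2\cong V\).

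I would try to define \(\lambda\) on generators as follows: \(\lambda(x-1)=x\) for \(x\in W_n\), and zero on products of length \(\ge2\) or of weight \(>n\). Well-definedness then reduces to checking the relations among generators, and the key identity is \((xy-1)=(x-1)+(y-1)+(x-1)(y-1)\). For this to work, products \((x-1)(y-1)\) with \(x\in W_a\), \(y\in W_b\) and \(a+b=n\) must not reintroduce degree-\(n\) linear terms. I expect this consistency check, really Losey's original combinatorial argument, to be the hard step.

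As a fallback, I would induct on the length of the series. By functoriality, the quotient map \(\mathbf Z[W]\to\mathbf Z[W/W_n]\) reduces the problem to the extension by \(W_n\). Then I would use the standard description \(\Delta(W)/\Delta(W)\Delta(W_n)\)-type relative quotients, as in the relative augmentation theory for abelian groups, to isolate \(W_n\).
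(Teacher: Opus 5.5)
Your two structural moves are the right ones. You pass to \(W=V/V_{n+1}\), where \(W_{n+1}=1\), and you look for an additive map \(\lambda\) with \(\lambda(x-1)=x\) that kills \(A_{n+1}(W)\). But the proposal stops exactly where the proof has to happen. You leave the well-definedness of \(\lambda\) as ``the hard step'' and defer it to Losey's combinatorics. The surrounding machinery is never carried out: the target \(\Psi\) in a symmetric or divided-power algebra, the reduction to finitely generated \(W\), Smith normal form along the series, and the inductive fallback. As written there is no argument that \(\lambda\) exists, so the proof is incomplete.

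The missing observation is that your \(\lambda\) is just the restriction of the standard map \(\Delta(W)\to\Delta(W)/\Delta(W)^2\cong W_{\rm ab}=W\).

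\begin{itemize}
\item Since \(\{g-1:g\ne1\}\) is a \(\mathbf Z\)-basis of \(\Delta(W)\), the rule \(\ell\bigl(\sum_g c_g(g-1)\bigr)=\prod_g g^{c_g}\) is a well-defined homomorphism \(\ell:\Delta(W)\to W\).
\item It vanishes on \(\Delta(W)^2\), because \((x-1)(y-1)=(xy-1)-(x-1)-(y-1)\) and \(W\) is abelian.
\item Every generator of \(A_{n+1}(W)\) with one factor is \(x-1\) with \(x\in W_{n+1}=1\), hence zero. Every generator with at least two factors lies in \(\Delta(W)^2\).
\item Thus \(A_{n+1}(W)\subseteq\Delta(W)^2\subseteq\ker\ell\), and \(h-1\in A_{n+1}(W)\) forces \(h=\ell(h-1)=1\).
\end{itemize}

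Pulled back to \(V\), this gives \(V\cap(1+A_{n+1})\subseteq V_{n+1}\) directly. So you do not even need Proposition~\ref{proposition-1.1-equivalence-with-lazards-dimension-equality}.

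Your worry that weight-\(n\) products might ``reintroduce linear terms'' is exactly what \(\Delta/\Delta^2\cong W_{\rm ab}\) excludes. Torsion and non-split extensions play no role.

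The paper gives no proof of this statement; it cites Losey. The completed argument is the same abelianization mechanism used in Lemma~\ref{lemma-2.2-kernel-transgression}. Applied to a nonabelian \(G/H_{n+1}\), it yields \(D^{\rm alg}_{n+1}\subseteq[G,G]H_{n+1}\), i.e.\ (2.2b), without appealing to Losey at all.
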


\begin{lemma}\label{lemma-2.2-kernel-transgression}

For an arbitrary prescribed \(N\)-series,
\[
\ker\phi_n\subseteq\operatorname{im}\kappa_n.
\tag{2.2}
\]
Moreover, with \(F_n\) as in (1.25),
\[
\kappa_n(F_n)=\ker\phi_n.
\tag{2.2a}
\]
\end{lemma}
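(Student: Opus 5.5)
The plan is to push \(h\) down to the abelianization \(V=G/[G,G]\) and use Theorem~\ref{theorem-2.1-loseys-abelian-injectivity-theorem} there. For (2.2), let \(hH_{n+1}\in\ker\phi_n\), so \(h\in H_n\) and \(h-1\in A_{n+1}(G,H_\bullet)\). By exactness of (1.7) at \(C_n\), \(\operatorname{im}\kappa_n=\ker(C_n\to H_1(E_n;\mathbf Z))\), so it suffices to show that \(h\) maps to \(1\) in \(E_n^{\rm ab}=G/[G,G]H_{n+1}\). In other words, I must show \(h\in[G,G]H_{n+1}\).

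Consider the abelian group \(V=G/[G,G]H_{n+1}\), with the image series \(V_a\) of \(H_a\); this is a descending series of subgroups. The ring surjection \(\mathbf Z[G]\to\mathbf Z[V]\) sends each weighted generator (1.1) of \(A_{n+1}(G,H_\bullet)\) to a weighted generator of \(A_{n+1}(V,V_\bullet)\), because images of elements of \(H_a\) lie in \(V_a\). Hence \(\bar h-1\in A_{n+1}(V,V_\bullet)\). Theorem~\ref{theorem-2.1-loseys-abelian-injectivity-theorem} then gives \(\bar h\in V_{n+1}\), which is the image of \(H_{n+1}\) and so is trivial in \(V\). Thus \(h\in[G,G]H_{n+1}\), and its class in \(C_n\) lies in the kernel of \(C_n\to H_1(E_n)\), that is, in \(\operatorname{im}\kappa_n\). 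The one point to check is that Losey's theorem applies to a series that need not start with \(V_1=V\) separately from the rest. Here \(V_1=V\) holds anyway, since \(H_1=G\).

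For (2.2a), I will use (1.24), \(\overline\vartheta_n=\phi_n\kappa_n\). For \(x\in F_n=\ker\overline\vartheta_n\) this gives \(\phi_n(\kappa_n(x))=0\), so \(\kappa_n(F_n)\subseteq\ker\phi_n\). Conversely, take \(c\in\ker\phi_n\). By (2.2) there is \(x\in H_2(Q_n;\mathbf Z)\) with \(\kappa_n(x)=c\), and then \(\overline\vartheta_n(x)=\phi_n(c)=0\), so \(x\in F_n\). This gives equality.

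I expect the main obstacle to be the reduction in (2.2), namely identifying the cokernel of \(\kappa_n\) with the image of \(C_n\) in \(E_n^{\rm ab}\) and arranging the quotient so that Losey's theorem kills exactly the needed part. Once the abelian quotient \(G/[G,G]H_{n+1}\) is chosen, the rest is formal.
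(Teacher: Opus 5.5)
Your proof is correct and follows the paper's argument. The paper applies Losey's theorem in \(G_{\rm ab}\) with the image series \(H_i[G,G]/[G,G]\) to get \(h\in[G,G]H_{n+1}\); you pass instead to the further quotient \(G/[G,G]H_{n+1}=H_1(E_n;\mathbf Z)\), where \(V_{n+1}\) is trivial, which is an immaterial variation, and your derivation of (2.2a) from (1.24) and (2.2) is the same as the paper's.
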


\begin{proof}
Let \(h\in H_n\) represent a class in \(\ker\phi_n\). Abelianize \(G\),
and give \(G_{\rm ab}\) the image series
\(\overline H_i=H_i[G,G]/[G,G]\). Functoriality of (1.1) sends
\(h-1\in A_{n+1}(G,H_\bullet)\) into
\(\bar h-1\in A_{n+1}(G_{\rm ab},\overline H_\bullet)\). Theorem~\ref{theorem-2.1-loseys-abelian-injectivity-theorem}
gives \(\bar h\in\overline H_{n+1}\), hence
\[
h\in [G,G]H_{n+1}.
\tag{2.2b}
\]
In the central extension (1.6), this says that the class of \(h\) lies
in the kernel of
\[
C_n\longrightarrow H_1(E_n;\mathbf Z).
\]
Exactness of (1.7) identifies that kernel with
\(\operatorname{im}\kappa_n\), proving (2.2). If \(x\in F_n\),
then (1.24) gives
\[
\phi_n\kappa_n(x)=\overline\vartheta_n(x)=0,
\]
so \(\kappa_n(F_n)\subseteq\ker\phi_n\). For the reverse inclusion, let
\(y\in\ker\phi_n\); then (2.2) gives
\(x\in H_2(Q_n;\mathbf Z)\) with \(\kappa_n(x)=y\). Equation (1.24)
then gives \(\overline\vartheta_n(x)=0\), hence \(x\in F_n\). This
proves (2.2a).
\end{proof}

\begin{theorem}[Hartl--Mikhailov--Passi \cite{HMP}]
\label{theorem-2.3-hmp-map-degree-four}

Let \(X\) be nilpotent of class at most three and put
\[
\bar X=X/\gamma _3(X),\qquad
\mathsf A=X_{\rm ab}=\bar X_{\rm ab},\qquad
B=\bar X'=\gamma _2(X)/\gamma _3(X).
\tag{2.3}
\]
The quotient \(\bar X\) has class at most two. Write
\[
c_2:\Lambda ^2\mathsf A\twoheadrightarrow B,
\qquad c_2(a\wedge b)=[\widetilde a,\widetilde b].
\tag{2.3a}
\]
For \(ma=0\), choose \(\widetilde a\in\bar X\) over \(a\) and
\(f_ma\in\Lambda ^2\mathsf A\) with \(c_2(f_ma)=\widetilde a^m\).
Hartl--Mikhailov--Passi (HMP) define natural homomorphisms on the exterior torsion square
\[
\mathsf A\widehat *\mathsf A
:=
\operatorname{Tor}_1^{\mathbf Z}(\mathsf A,\mathsf A)
/
\langle\tau_m(a,a):ma=0\rangle
\tag{2.4}
\]
by
\[
\begin{aligned}
\delta _2\bigl(\overline{\tau_m(a,b)}\bigr)
 &=a\otimes\widetilde b^m-b\otimes\widetilde a^m
 &&\in \mathsf A\otimes B,\\
\delta _3\bigl(\overline{\tau_m(a,b)}\bigr)
 &=\binom m2\,s_3(a\otimes a\otimes b-a\otimes b\otimes b)
 &&\in\operatorname{SP}^3(\mathsf A).
\end{aligned}
\tag{2.5}
\]
These formulas are well defined. Set
\[
K_X:=\ker\delta _2\cap\ker\delta _3\subseteq \mathsf A\widehat *\mathsf A.
\tag{2.6}
\]
Put \(\Lambda=\mathbf Z[\bar X]\) and \(\Delta=\Delta(\bar X)\). The
degree-two polynomial homology group of HMP is
\[
P_2H_2(\bar X;\mathbf Z)
:=
H_2\!\left(
\mathbf Z\otimes_{\Lambda/\Delta^3}P_2B(\Lambda)
\right),
\tag{2.6a$'$}
\]
where \(\mathbf Z\) is the right \(\Lambda/\Delta^3\)-module induced by
augmentation. The natural projection from the ordinary bar construction
to the polynomial bar construction, after taking trivial coefficients,
induces the canonical map
\[
\rho^{\bar X}_{2*}:H_2(\bar X;\mathbf Z)
\longrightarrow P_2H_2(\bar X;\mathbf Z).
\tag{2.6a}
\]

HMP construct a natural homomorphism
\[
\widetilde j_X:\mathsf A\widehat *\mathsf A\longrightarrow H_2(\bar X;\mathbf Z).
\tag{2.6b}
\]
In the Hopf model this map is described as follows. Choose a free presentation
\[
1\longrightarrow R\longrightarrow F\xrightarrow{q}\bar X
\longrightarrow1.
\]
For \(\overline{\tau_m(a,b)}\), choose \(a_0,b_0\in F\) above
\(\widetilde a,\widetilde b\), and choose a commutator word \(w\in F'\)
representing \(f_ma\), so that \(q(w)=\widetilde a^m\). Since \(\bar X\)
has class at most two, \([b_0,w]\in R\cap F'\). Under the Hopf
isomorphism
\[
H_2(\bar X;\mathbf Z)\cong(R\cap F')/[F,R],
\]
HMP's map is characterized by
\[
\widetilde j_X\bigl(\overline{\tau_m(a,b)}\bigr)
=\overline{[b_0,w]}.
\tag{2.6b$'$}
\]
In the notation of HMP,
\(\widetilde j_X=\overline{\nu i}\,\delta_1\). Their construction proves
that (2.6b$'$) is independent of the choices, respects the exterior-torsion
relations, and is natural in \(X\).

Write \(j_X=\widetilde j_X|_{K_X}\). By HMP~\cite[Theorem~4.2]{HMP},
\[
\operatorname{im}j_X
=\ker\bigl(\rho^{\bar X}_{2*}:H_2(\bar X)\to P_2H_2(\bar X)\bigr).
\tag{2.6c}
\]
\end{theorem}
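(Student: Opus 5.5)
The statement collects results of Hartl--Mikhailov--Passi, so the plan is not to reprove the deep part, (2.6c), but to check the elementary assertions that the paper's normalizations depend on, and then to match the Hopf-model description (2.6b\(')\) with the map \(\overline{\nu i}\,\delta_1\) of \cite{HMP}, so that \cite[Theorem~4.2]{HMP} can be quoted verbatim.

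\textbf{Step 1: the class-two quotient and \(c_2\).} Since \(\gamma_3(\bar X)=1\), the commutator map \(\bar X\times\bar X\to B\) is bilinear. It factors through \(\mathsf A\times\mathsf A\) because \([\bar X,\bar X]\) is central, and it is alternating. Hence \(c_2\) is a well-defined surjection \(\Lambda^2\mathsf A\to B\).

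\textbf{Step 2: the element \(\widetilde a^m\).} If \(ma=0\), then \(\widetilde a^m\in B\). Replacing \(\widetilde a\) by \(\widetilde a c\) with \(c\in B\) central multiplies \(\widetilde a^m\) by \(c^m\). In \(\mathsf A\otimes B\) one has \(b\otimes c^m=mb\otimes c\), which vanishes whenever \(mb=0\). So the terms \(a\otimes\widetilde b^m\) and \(b\otimes\widetilde a^m\) are independent of the lifts, given that \(\tau_m(a,b)\) requires \(ma=mb=0\).

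\textbf{Step 3: well-definedness of \(\delta_2\) and \(\delta_3\).} One must check compatibility with the defining relations of \(\operatorname{Tor}_1^{\mathbf Z}(\mathsf A,\mathsf A)\):
\begin{itemize}
\item biadditivity in \(a\) and \(b\);
\item the change-of-exponent relation \(\tau_{mk}(a,b)=\tau_m(ka,b)\) when \(ma'=0\) conventions apply.
\end{itemize}
Each reduces to identities such as \(\widetilde{(ka)}^{\,m}=\widetilde a^{\,mk}\) modulo \(m\)-th powers of central elements, together with the binomial-coefficient identity \(\binom{mk}2\equiv k\binom m2\) modulo the torsion present. Vanishing on \(\tau_m(a,a)\) is immediate: the two terms of \(\delta_2\) cancel, and \(a\otimes a\otimes a-a\otimes a\otimes a=0\) in \(\delta_3\). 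Thus \(\delta_2\) and \(\delta_3\) descend to \(\mathsf A\widehat*\mathsf A\).

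\textbf{Step 4: the Hopf-model formula (2.6b\(')\).} First, \(q([b_0,w])=[\widetilde b,\widetilde a^m]=1\) because \(\widetilde a^m\) is central, and \([b_0,w]\in F'\); hence \([b_0,w]\in R\cap F'\). Next, the class \(\overline{[b_0,w]}\) is independent of the choices:
\begin{itemize}
\item changing \(w\) to \(wr\) with \(r\in R\cap F'\) changes \([b_0,w]\) by a conjugate of \([b_0,r]\in[F,R]\);
\item changing \(b_0\) to \(b_0r\) with \(r\in R\) changes it by a conjugate of \([r,w]\in[F,R]\);
\item changing the lift \(\widetilde b\) by a central element \(c\) changes \([b_0,w]\) by \([c_0,w]\), where \(c_0\in F'\) lies over \(c\); modulo \([F,R]\) this equals a power of \([c_0,a_0]\) coming from \(mb=0\), i.e.\ an element of \([F,R]\).
\end{itemize}
Additivity and naturality in \(X\) follow because all the choices can be transported along any map of presentations.

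\textbf{Step 5, the main obstacle: identification with HMP and the image statement.} The genuinely difficult point is identifying this map with \(\overline{\nu i}\,\delta_1\) as defined in \cite{HMP}. Their \(\delta_1\) sends \(\tau_m(a,b)\) to the torsion element of the Koszul/derived-functor picture. The plan is to compute \(\nu i\) on the standard free-presentation representative, tracking the Hopf isomorphism through the five-term sequence, and to check that it outputs \([b_0,w]\) up to a sign fixed once and for all. Once this identification is made, (2.6c) is exactly \cite[Theorem~4.2]{HMP}, applied to \(K_X=\ker\delta_2\cap\ker\delta_3\), and the map \(\rho^{\bar X}_{2*}\) is the one induced by the projection of bar constructions to \(P_2B(\Lambda)\).
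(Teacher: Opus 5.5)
Your overall route is the paper's: it gives no proof of this statement beyond attribution. The well-definedness of (2.5), the Hopf-model description of $\widetilde j_X$ with $\widetilde j_X=\overline{\nu i}\,\delta_1$, and (2.6c) are all imported from HMP. Your Step~5 makes the same move, and like the paper it leaves the identification with $\overline{\nu i}\,\delta_1$ asserted rather than checked. The genuine problem is in the independent checks you add in Steps~3 and~4. They do not go through as described, and they fail precisely in the $2$-torsion case the paper needs.

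\textbf{Step 3 fails for $\delta_2$.} Biadditivity of $\delta_2$ does not reduce to identities modulo $m$-th powers of central elements. In the class-two group $\bar X$ one has $(\widetilde a_1\widetilde a_2)^m=\widetilde a_1^m\widetilde a_2^m[\widetilde a_2,\widetilde a_1]^{\binom m2}$. So the term $b\otimes\widetilde a^m$ has additivity defect $\binom m2\,b\otimes c_2(a_2\wedge a_1)$ in $a$. For odd $m$ this is $\tfrac{m-1}2(mb\otimes c_2(a_2\wedge a_1))=0$. For even $m$ it equals $\tfrac m2\,b\otimes c_2(a_2\wedge a_1)$, which need not vanish.

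For a concrete case, take $X=D_8\times C_2$ with $D_8=\langle r,s\rangle$, $C_2=\langle t\rangle$ and $m=2$. Let $u,v,e\in\mathsf A\cong C_2^3$ be the images of $r,s,t$, so $B=\langle r^2\rangle\cong C_2$. In $\operatorname{Tor}$ one has $\tau_2(u,e)=\tau_2(v,e)+\tau_2(u+v,e)$. Formula (2.5) gives $-e\otimes r^2\neq0$ on the left but $0+0$ on the right, because $s^2=(rs)^2=t^2=1$. By your own Step~2 the values do not depend on the lifts, so no choice repairs this. The symbol formula for $\delta_2$ is therefore not additive on $\operatorname{Tor}_1(\mathsf A,\mathsf A)$, and its well-definedness cannot be checked symbol by symbol. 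The paper's ``these formulas are well defined'' can only be read as a citation of HMP's intrinsic construction. By contrast, $\delta_3$ is fine: all its defects are multiples of $m$ times classes killed by $m$.

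\textbf{Step 4 has the same issue for $\widetilde j_X$.} The same Hall--Petresco term appears there. The word $w_{a_1+a_2}$ differs from $w_{a_1}w_{a_2}$ by a lift of $[\widetilde a_2,\widetilde a_1]^{\binom m2}$. Additivity of $\overline{[b_0,w]}$ in $a$ therefore requires controlling the class of $[b_0,[a_{2,0},a_{1,0}]]^{\binom m2}$ in $(R\cap F')/[F,R]$. Transporting choices along maps of presentations gives naturality, not additivity, so this is not addressed. You should replace these ``routine'' checks by the citation, and use the symbol formulas only where HMP's construction justifies them.

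\textbf{Smaller point.} In your third bullet of Step~4, the hypothesis $mb=0$ is not what kills $[c_0,w]$. Since $\widetilde a^m$ is central, $x\mapsto[x_0,w]$ induces a homomorphism $\bar X\to R/[F,R]$ with abelian target. It therefore vanishes on $c\in\bar X'$. The hypothesis $mb=0$ is what handles a change of the lift $\widetilde a$.
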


Combining Theorems~3.8 and~4.2 of \cite{HMP} with an identity in the
group ring gives the following surjectivity statement.

Let
\[
\kappa_X:H_2(\bar X;\mathbf Z)\longrightarrow\gamma _3(X)
\tag{2.6d}
\]
be the five-term transgression of the central extension
\[
1\longrightarrow\gamma _3(X)\longrightarrow X
\longrightarrow\bar X\longrightarrow1.
\tag{2.6e}
\]
We use the factor-set convention (1.23) for \(\kappa_X\). Under the Hopf
isomorphism used by HMP, it agrees with the transgression
\(\alpha _1\iota\nu^{-1}\) in (2.6k). Define
\[
\widetilde\Theta_X:\mathsf A\widehat *\mathsf A\longrightarrow\gamma _3(X),
\qquad
\widetilde\Theta_X(\xi)
=\bigl(\kappa_X\widetilde j_X(\xi)\bigr)^{-1},
\qquad
\Theta_X:=\widetilde\Theta_X|_{K_X}.
\tag{2.6f}
\]
The subgroup \(K_X\) and the maps \(\widetilde\Theta_X\), \(\Theta_X\)
are natural for homomorphisms of class-at-most-three groups.

\begin{corollary}[Surjectivity onto \(D_4(X)\), after Hartl--Mikhailov--Passi]
\label{corollary-2.3prime-theta-surjective}

The restriction \(\Theta_X\) is surjective:
\[
\Theta_X:K_X\twoheadrightarrow D_4(X).
\tag{2.6g}
\]
\end{corollary}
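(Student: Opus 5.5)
The proof has two halves: first that \(\Theta_X\) lands in \(D_4(X)\), then that every element of \(D_4(X)\) is hit. For class at most three, \(D_4(X)\subseteq\gamma_3(X)\), since \(D_3=\gamma_3\) holds universally. Hence \(D_4(X)\) is exactly the kernel of
\[
\phi_3:\gamma_3(X)=\gamma_3/\gamma_4\longrightarrow\Delta^3/\Delta^4,
\]
where \(\phi_3\) is the map (0.2) for the lower central series with \(n=3\). Lemma~\ref{lemma-2.2-kernel-transgression} applies to the central extension (2.6e), which is (1.6) with \(E_3=X\), \(Q_3=\bar X\), \(C_3=\gamma_3(X)\). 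It gives \(D_4(X)=\ker\phi_3=\kappa_X(F_3)\), where
\[
F_3=\ker\bigl(\phi_3\kappa_X\bigr)\subseteq H_2(\bar X;\mathbf Z).
\]
Since inversion is irrelevant for images, surjectivity of \(\Theta_X\) reduces to showing that \(F_3\) and \(j_X(K_X)\) have the same image under \(\kappa_X\).

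For the inclusion \(\kappa_X(F_3)\subseteq\kappa_X(j_X K_X)\), I would compare \(F_3\) with \(\ker\rho^{\bar X}_{2*}\), which equals \(\operatorname{im}j_X\) by (2.6c). The key input is HMP's Theorem~3.8, which I read as follows. The composite \(\phi_3\kappa_X\), that is \(\overline\vartheta_3\) of (1.20), factors through \(\rho^{\bar X}_{2*}\) followed by a map
\[
P_2H_2(\bar X)\longrightarrow\Delta^3/\Delta^4,
\]
which is injective on the relevant part, or at least injective after composing with \(\kappa_X\).

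The concrete step is to verify the factorization at the chain level. I take the factor-set formula (1.15) for \(\vartheta_{3,s}\) on \([q\mid q']\) and express \(\widetilde\alpha(q,q')-1\) modulo \(\Delta(X)^4\) as a polynomial in \((s(q)-1)\) and \((s(q')-1)\). The relevant identity is
\[
xy(xy)^{-1}-1=(x-1)(y-1)-\ldots,
\]
expanded modulo \(\Delta^4\) after mapping to \(\mathbf Z[X]\). This expansion shows that \(\vartheta_{3,s}\) depends only on the image of \([q\mid q']\) in the polynomial bar construction \(P_2B(\Lambda)\) tensored down modulo \(\Delta^3\). Consequently \(\ker\rho_{2*}\subseteq F_3\). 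The converse inclusion would follow by injectivity on \(\kappa_X\)-images, which is what HMP's Theorem~3.8 identifies: \(D_4/\gamma_4\) equals the image of the kernel of \(\rho_{2*}\) under the transgression. Combining, \(\kappa_X(F_3)=\kappa_X(\ker\rho_{2*})=\kappa_X j_X(K_X)\).

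For the inclusion \(\kappa_X j_X(K_X)\subseteq D_4(X)\), the same group ring identity gives \(\operatorname{im}j_X\subseteq F_3\) via (1.24), so this direction is the easy one. The main obstacle is the group ring identity matching the paper's factor-set convention (1.23) with HMP's transgression \(\alpha_1\iota\nu^{-1}\). Concretely, one must check that for \(\xi\in K_X\) with \(\widetilde j_X(\xi)=\overline{[b_0,w]}\), the element \([b,\widetilde a^m]\) lifted to \(\gamma_3(X)\) satisfies \([b,\widetilde a^m]-1\in\Delta^4\) exactly when \(\delta_2(\xi)=\delta_3(\xi)=0\). I would attempt this first by direct expansion using the formula
\[
[x,y]-1=x^{-1}y^{-1}\bigl((x-1)(y-1)-(y-1)(x-1)\bigr),
\]
with \(y=\widetilde a^m\) written via the binomial expansion \((1+(a-1))^m\). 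The \(\binom m2\) term should produce exactly the \(\delta_3\) contribution and the linear term the \(\delta_2\) contribution.
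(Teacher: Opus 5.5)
\textbf{There is a gap.} Stripped of detours, your proof of surjectivity is a single citation. You read HMP's Theorem~3.8 as $D_4(X)=\kappa_X(\ker\rho^{\bar X}_{2*})$, and (2.6c) then gives $D_4(X)=\kappa_X j_X(K_X)=\Theta_X(K_X)^{-1}=\Theta_X(K_X)$. That is the paper's argument. The passage through Lemma~2.2 and $F_3$ contributes nothing, since you invoke HMP for both inclusions anyway.

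What you skip is the one step the paper actually proves. As the paper uses it, Theorem~3.8 applied to the central extension (2.6e) describes the \emph{relative} subgroup $D_4(X,\gamma_3X)\cap\gamma_3X$, where $D_4(X,C)=X\cap\bigl(1+I(C)\Delta(X)+\Delta(X)^4\bigr)$. It does not describe $D_4(X)$ directly. To reach your reading you need two facts. First, $\gamma_3(X)-1\subseteq\Delta(X)^3$, so $I(\gamma_3X)\Delta(X)\subseteq\Delta(X)^4$ and hence $D_4(X,\gamma_3X)=D_4(X)$. Second, $D_4(X)\subseteq D_3(X)=\gamma_3(X)$, so intersecting with $\gamma_3X$ changes nothing.

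The steps you present as the real work should be dropped: as written they fail, and in any case they target the wrong inclusion.
\begin{itemize}
\item Your displayed identity is vacuous, since $xy(xy)^{-1}=1$. If you meant $s(q)s(q')s(qq')^{-1}-1$, that element depends on the independent choice $s(qq')$. So $\vartheta_{3,s}([q\mid q'])$ is not a function of $s(q)-1$ and $s(q')-1$ modulo $\Delta(X)^4$, and no chain-level factorization through the polynomial bar construction follows. Section dependence disappears only on cycles, by (1.22).
\item Even if granted, such a factorization yields only $\ker\rho^{\bar X}_{2*}\subseteq F_3$, that is, $\Theta_X(K_X)\subseteq D_4(X)$.
\item Your proposed direct expansion of $[\widehat b,\widehat a^m]-1$ has the same limitation. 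At best it shows that $K_X$ lands in $D_4(X)$, never that every element of $D_4(X)$ is hit. Surjectivity is exactly the nonelementary content of HMP's Theorems~3.8 and~4.2.
\end{itemize}

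The criterion you propose to check is also false. $K_X$ is cut out by the vanishing of $\delta_2$ and $\delta_3$ on a sum of symbols, not symbol by symbol. Moreover the implication from $\Delta^4$-membership to $\delta_2=\delta_3=0$ fails. Take $X=C_2\times C_2$ with generators $a,b$. Then $\gamma_3(X)=1$, so $\widetilde\Theta_X$ is trivial and every symbol maps into $D_4(X)$. Yet $\delta_3(\overline{\tau_2(a,b)})=a\vee a\vee b-a\vee b\vee b\neq0$ in $\operatorname{SP}^3(\mathsf A)$, so $K_X=0$.
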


\begin{proof}
For a central subgroup \(C\le X\), write
\[
D_4(X,C)
:=
X\cap\left(1+I(C)\Delta(X)+\Delta(X)^4\right),
\tag{2.6g$'$}
\]
where \(I(C)\) is the ideal of \(\mathbf Z[X]\) generated by
\(\{c-1:c\in C\}\). Applying HMP~\cite[Theorem~3.8]{HMP} to (2.6e) gives
\[
D_4(X,\gamma _3X)\cap\gamma _3X
=\kappa_X(\ker\rho^{\bar X}_{2*}).
\tag{2.6h}
\]
Since \(\gamma _3(X)-1\subseteq\Delta(X)^3\), one has
\[
I(\gamma _3X)\Delta(X)+\Delta(X)^4=\Delta(X)^4
\tag{2.6i}
\]
and therefore \(D_4(X,\gamma _3X)=D_4(X)\). Since
\(D_4(X)\subseteq D_3(X)=\gamma _3(X)\), the left side of (2.6h) is
\(D_4(X)\). Equation (2.6c) now gives (2.6g).
\end{proof}

\begin{lemma}[Evaluation of the HMP map]
\label{lemma-2.4-hmp-map-evaluation}

Use the commutator convention \([x,y]=xyx^{-1}y^{-1}\). For every
symbol \(\overline{\tau_m(a,b)}\) in \(\mathsf A\widehat *\mathsf A\),
\[
\widetilde\Theta_X
\bigl(\overline{\tau_m(a,b)}\bigr)
=[\widehat b,\widehat a^m]^{-1}.
\tag{2.6j}
\]
Here \(\widehat a,\widehat b\in X\) are arbitrary lifts of
\(\widetilde a,\widetilde b\in\bar X\), and the right side is regarded
as an element of the central group \(\gamma _3(X)\).
\end{lemma}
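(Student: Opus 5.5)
The plan is to compute \(\kappa_X\widetilde j_X(\overline{\tau_m(a,b)})\) directly through the Hopf model (2.6b\('\)), and then invert.

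\textbf{Step 1: set up the presentation.} Fix a free presentation \(1\to R\to F\xrightarrow{q}\bar X\to1\). Then \(X\to\bar X\) is a central extension with kernel \(\gamma_3(X)\). Since \(F\) is free, \(q\) lifts to a homomorphism \(\hat q:F\to X\), and \(\hat q(R)\subseteq\gamma_3(X)\).

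\textbf{Step 2: describe the transgression in the Hopf model.} Under \(H_2(\bar X)\cong(R\cap F')/[F,R]\), the five-term transgression of (2.6e) sends \(\overline r\mapsto\hat q(r)\), up to the sign convention. This map is well defined because \(\gamma_3(X)\) is central, so \(\hat q([F,R])=1\). The paper says \(\kappa_X\) with the factor-set convention (1.23) agrees with HMP's \(\alpha_1\iota\nu^{-1}\). I will take that identification as given. The one thing to check is the sign: whether (1.23) corresponds to \(\overline r\mapsto\hat q(r)\) or to its inverse. I expect this to be the main obstacle. I would settle it by the standard comparison of a bar 2-cycle with a Hopf relator, where a relator \(r=x_1x_2(x_1x_2)^{-1}\cdots\) maps to a product of factor sets \(s(q)s(q')s(qq')^{-1}\). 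Under the convention (1.13), this gives \(\kappa_X(\overline r)=\hat q(r)\) exactly.

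\textbf{Step 3: evaluate on the symbol.} By (2.6b\('\)), \(\kappa_X\widetilde j_X(\overline{\tau_m(a,b)})=\hat q([b_0,w])=[\hat q(b_0),\hat q(w)]\). Here \(\hat q(b_0)\) is a lift \(\widehat b\) of \(\widetilde b\). Also \(\hat q(w)\in X\) lies over \(q(w)=\widetilde a^m\), as does \(\widehat a^m\) for any lift \(\widehat a\) of \(\widetilde a\). So \(\hat q(w)=\widehat a^m c\) with \(c\in\gamma_3(X)\) central. Hence \([\widehat b,\hat q(w)]=[\widehat b,\widehat a^m]\), because commutators in \(X\) only depend on the arguments modulo the centre.

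\textbf{Step 4: remove the dependence on the lifts.} This does not depend on which lift \(\widehat b\) is chosen: changing \(\widehat b\) by a central element changes nothing. It also does not depend on \(\widehat a\): changing \(\widehat a\) by \(c\in\gamma_3(X)\) changes \(\widehat a^m\) by the central element \(c^m\). The choice of \(\hat q\) matters no further. Inverting as in (2.6f) then gives (2.6j).

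\textbf{Step 5: handle arbitrary symbols.} Every element of \(\mathsf A\widehat*\mathsf A\) is a sum of symbols and the statement is only about symbols, so no further extension is needed. The only delicate point remains the sign and convention matching in Step 2, including the commutator convention \([x,y]=xyx^{-1}y^{-1}\) and the fact that \(w\) represents \(f_ma\) with \(q(w)=\widetilde a^m\), not its inverse.
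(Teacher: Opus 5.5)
Your proposal is correct and follows the paper's proof essentially step for step. You lift \(q\) to a homomorphism \(F\to X\), use the identification \(\kappa_X=\alpha_1\iota\nu^{-1}\) to get \(\kappa_X\widetilde j_X(\overline{\tau_m(a,b)})=[\hat q(b_0),\hat q(w)]\), absorb the central discrepancy \(\hat q(w)\widehat a^{-m}\in\gamma_3(X)\) together with the choice of lifts, and invert via (2.6f). The sign check you flag as the main obstacle adds nothing beyond the paper's argument, since the paper likewise takes the agreement of the factor-set convention (1.23) with \(\alpha_1\iota\nu^{-1}\) as given.
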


\begin{proof}
Use the free presentation and the words \(a_0,b_0,w\) from (2.6b\('\)).
Since \(F\) is free, choose a homomorphism \(F\to X\) lifting \(q\). It
sends \(R\) into the central kernel, hence kills \([F,R]\) and factors
as a homomorphism
\[
\alpha _0:F/[F,R]\longrightarrow X.
\]
Its restriction to \(R/[F,R]\) has image in the central kernel
\(C=\gamma _3(X)\); denote that restriction by \(\alpha _1\). The proof of
HMP~\cite[Theorem~3.8]{HMP} identifies the five-term transgression with
\[
\kappa_X=\alpha _1\iota\nu^{-1},
\tag{2.6k}
\]
where \(\nu^{-1}\) is the Hopf isomorphism and \(\iota\) is the
inclusion of \((R\cap F')/[F,R]\) into \(R/[F,R]\). Formula (2.6b\('\))
therefore gives
\[
\kappa_X\widetilde j_X
\bigl(\overline{\tau_m(a,b)}\bigr)
=\alpha _1([b_0,w])
=[\alpha _0(b_0),\alpha _0(w)].
\tag{2.6l}
\]
Put \(\widehat a=\alpha _0(a_0)\) and \(\widehat b=\alpha _0(b_0)\).
Since \(q(w)=\widetilde a^m\),
\[
c:=\alpha _0(w)\widehat a^{-m}\in C.
\]
The subgroup \(C\) is central, so (2.6l) becomes
\[
[\widehat b,\alpha _0(w)]
=[\widehat b,c\widehat a^m]
=[\widehat b,\widehat a^m].
\]
The construction of \(\widetilde j_X\) makes the class of \([b_0,w]\) in
\((R\cap F')/[F,R]\) independent of the choices. Centrality of \(C\)
likewise makes the displayed commutator independent of the chosen lifts.
Equation (2.6f) gives (2.6j).
\end{proof}

For a finite abelian \(2\)-group
\[
\mathsf A=\bigoplus_{i=1}^sC_{2^{b_i}}e_i,\qquad b_1\le\cdots\le b_s,
\tag{2.7}
\]
the invariant-factor splitting induces the cyclic decomposition
\[
\mathsf A\widehat *\mathsf A
\cong
\bigoplus_{1\le i<j\le s}
C_{2^{b_i}}
\tau_{2^{b_i}}\!\left(e_i,2^{b_j-b_i}e_j\right).
\tag{2.8}
\]

\begin{theorem}[Tahara]
\label{theorem-2.5-tahara-family}

Specializing Tahara's counterexample family
\cite[pp.~392--393]{Tahara1977} to \(l=0\), for every \(k\ge2\) let
\(P_k\) be the finite \(2\)-group of nilpotency class three generated by
four elements
\(x_{11},x_{12},x_{13},x_{14}\), with
\[
\mathsf A_k=(P_k)_{\rm ab}
\cong
C_{2^k}e_1\oplus C_{2^{k+2}}e_2
\oplus C_{2^{k+4}}e_3\oplus C_{2^{k+4}}e_4.
\tag{2.9}
\]
In addition,
\[
\gamma_3(P_k)=\langle z=x_{21}^4\rangle\cong C_T,
\qquad T=2^{k+4},
\tag{2.10}
\]
and
\[
h_k:=x_{24}^{2^{k+5}}=z^{T/2}
\in D_4(P_k)\setminus\gamma_4(P_k).
\tag{2.11}
\]
In Tahara's notation~\cite{Tahara1977}, the six pair columns are indexed by
\(1\le i<j\le4\). A relation vector
\(u=(u_{ij})\in\mathbf Z^6\) is admissible when its mixed-tensor and
symmetric-cube components vanish in the corresponding quotient modules.

The second lower-central quotient is
\[
\gamma_2(P_k)/\gamma_3(P_k)
\cong
C_4f_1\oplus C_{2^k}f_2
\oplus C_{2^{k+2}}f_3\oplus C_{2^{k+4}}f_4.
\tag{2.12}
\]
For
\[
p_i=\overline{x_{1i}^{d(i)}}\in
\gamma_2(P_k)/\gamma_3(P_k),
\]
the \(f_3\)-coordinates are
\[
[f_3]p_1=2^{k-2},\qquad
[f_3]p_3=2^k,\qquad
[f_3]p_2=[f_3]p_4=0.
\tag{2.12a}
\]
For an admissible vector \(u\), Tahara's central map is
\[
\eta_H(u)
=
z^{\frac T4(u_{12}-u_{23})-\frac T2u_{13}}.
\tag{2.13}
\]
If \(d(i)=|e_i|\), the corresponding pair values are
\[
\rho_{ij}^{H_3}
=
[x_{1i}^{d(i)},x_{1j}]^{d(j)/d(i)},
\tag{2.13a}
\]
and \(\eta_H(u)=\prod_{i<j}(\rho_{ij}^{H_3})^{u_{ij}}\).

The vector
\[
(u_{12},u_{13},u_{23})=(2,1,2),
\qquad
u_{14}=u_{24}=u_{34}=0,
\tag{2.14}
\]
is admissible and has central value \(h_k\).
\end{theorem}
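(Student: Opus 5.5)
The statement is a specialization of Tahara's published family, so the plan is to verify each listed item against \cite[pp.~392--393]{Tahara1977} at \(l=0\), using the machinery of Theorem~\ref{theorem-2.3-hmp-map-degree-four} and Lemma~\ref{lemma-2.4-hmp-map-evaluation} rather than repeating Tahara's full calculation. Five items need checking. The structural data (2.9), (2.10), (2.12) and (2.12a) come directly from Tahara's presentation of \(P_k\). Put \(l=0\) in his defining relations. Read off the orders \(d(i)=|e_i|\) of the generators modulo \(\gamma_2\). Read off the cyclic decomposition of \(\gamma_2/\gamma_3\) with basis \(f_1,\dots,f_4\), and the expressions \(x_{1i}^{d(i)}\in\gamma_2\) modulo \(\gamma_3\). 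From these, the \(f_3\)-coordinates in (2.12a) are immediate. Class three and \(\gamma_3=\langle x_{21}^4\rangle\cong C_{2^{k+4}}\) are part of Tahara's construction, and I will cite them.

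Next, the pair values (2.13a). I will use the cyclic decomposition (2.8) of \(\mathsf A_k\widehat*\mathsf A_k\), whose generators are \(\tau_{d(i)}(e_i,(d(j)/d(i))e_j)\). Lemma~\ref{lemma-2.4-hmp-map-evaluation} gives
\[
\widetilde\Theta\bigl(\overline{\tau_{d(i)}(e_i,(d(j)/d(i))e_j)}\bigr)=[\widehat b,\widehat a^{d(i)}]^{-1},
\qquad \widehat a=x_{1i},\quad \widehat b=x_{1j}^{d(j)/d(i)}.
\]
Since \(x_{1i}^{d(i)}\in\gamma_2\), the commutator with it is bilinear into the central \(\gamma_3\). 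So \([x_{1j}^{d(j)/d(i)},x_{1i}^{d(i)}]^{-1}=[x_{1i}^{d(i)},x_{1j}]^{d(j)/d(i)}=\rho_{ij}^{H_3}\). Hence \(\eta_H(u)=\widetilde\Theta(\sum u_{ij}\tau_{ij})\) is multiplicative in \(u\), and admissibility of \(u\) is exactly membership \(\sum u_{ij}\tau_{ij}\in K_X\), i.e.\ \(\delta_2=\delta_3=0\).

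The formula (2.13) then needs each \(\rho_{ij}\) as a power of \(z\). From (2.12a), only the \(f_3\)-components of \(p_1,p_3\) contribute when commuted with \(x_{1j}\). The map \(\gamma_2/\gamma_3\times\mathsf A\to\gamma_3\) is given in Tahara's presentation by \([f_3,e_j]\). This should give \(\rho_{12}=z^{T/4}\), \(\rho_{23}=z^{-T/4}\), \(\rho_{13}=z^{-T/2}\), and \(\rho_{i4}\) trivial, all modulo the displayed exponents. This computation uses the specific commutator relations of Tahara's group, and it is the step I expect to be the main obstacle. Any sign or power-of-two slip here changes the exponent.

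For (2.14), evaluate (2.13) at \(u=(2,1,2)\). The exponent is \(\frac T4(2-2)-\frac T2=-T/2\equiv T/2\pmod T\), so the central value is \(z^{T/2}\), which I will match with \(x_{24}^{2^{k+5}}\) via Tahara's relations. Admissibility means computing \(\delta_2\) in \(\mathsf A\otimes B\) using (2.5) and \(p_i\). It also means computing \(\delta_3=\binom{d(i)}2 s_3(\cdots)\) in \(\mathrm{SP}^3(\mathsf A)\), checking that the coefficients are divisible by the relevant orders. Corollary~\ref{corollary-2.3prime-theta-surjective} then places \(h_k\in D_4(P_k)\). Finally, \(\gamma_4(P_k)=1\) because the class is three, and \(h_k=z^{T/2}\ne1\), so \(h_k\notin\gamma_4\).
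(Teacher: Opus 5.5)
The structural data (2.9), (2.10), (2.12), (2.12a) and the identity $x_{24}^{2^{k+5}}=z^{T/2}$ are handled as in the paper, which proves the theorem essentially by transcribing Tahara's relations. The genuine gap is in the step you flagged, the pair values, and the mechanism you propose there cannot work.

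Since $P_k$ has class three, $(y,x)\mapsto[y,x]$ induces a bilinear pairing $\gamma_2(P_k)/\gamma_3(P_k)\otimes\mathsf A_k\to\gamma_3(P_k)$. The value $\rho_{ij}^{H_3}=[x_{1i}^{d(i)},x_{1j}]^{d(j)/d(i)}$ is the pairing of $p_i$ with $e_j$, raised to the power $d(j)/d(i)$. So each pair value depends only on $p_i$, and $p_3$ enters only $\rho_{34}$. Also $\rho_{23}$ depends only on $p_2$, whose $f_3$-coordinate is $0$ by (2.12a), so your reduction to the $f_3$-components of $p_1,p_3$ gives $\rho_{23}=1$. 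It gives $\rho_{13}=1$ too. With $f_3=x_{23}\gamma_3(P_k)$, the $f_3$-part of $\rho_{13}$ is $[x_{23},x_{13}]^{16\cdot 2^{k-2}}=[x_{23}^{2^{k+2}},x_{13}]=1$, because $f_3$ has order $2^{k+2}$ and $x_{23}^{2^{k+2}}$ is central.

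Hence this route cannot produce (2.13). It loses the dependence on $u_{13}$ and $u_{23}$, which is exactly what Lemma~7.2 and Corollary~7.3 use later. The coordinates (2.12a) belong to the mixed-tensor side: they give the $e_2\otimes f_3$-entries of $M_{12}$ and $M_{23}$ in the admissibility check and in Lemma~7.1. They do not determine the pair values. The paper does not derive (2.15a) from the $p_i$; it quotes $\rho_{12},\rho_{13},\rho_{23}$ directly from Tahara's relations as powers of $x_{21}$.

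The full expansions (2.15) do not rescue your computation either. Their extra $f_4$- and $f_2$-terms enter $\rho_{13}$ and $\rho_{23}$ with exponents $2^{2k+3}$, $2^{2k+2}$ and $2^{2k}$. These are killed by the exponent $2^{k+4}$ of $\gamma_3(P_k)$ and by the order $2^k$ of $f_2$. So the nontrivial values in (2.15a) cannot be reached from the recorded $p_i$ through the pairing at all. They have to be checked directly against Tahara's relations on pp.~392--393.

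The rest of your plan is a legitimate variant of the paper's. The paper cites Tahara for $h_k\in D_4(P_k)$ and for the reduced admissibility conditions (2.15c). You instead check $\delta_2(\xi)=\delta_3(\xi)=0$ for $\xi=2g_{12}^{(k)}+g_{13}^{(k)}+2g_{23}^{(k)}$ and then apply Lemma~2.4 and Corollary~2.3$'$. This is the content of Lemma~2.6 moved in front of the theorem. It is not circular, since those two results rest only on HMP.

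The check does go through with (2.15). The only surviving terms of $\delta_2(\xi)$ are $-2^{k+1}e_2\otimes f_3$ from $2g_{12}^{(k)}$ and $+2^{k+1}e_2\otimes f_3$ from $2g_{23}^{(k)}$, and they cancel. Every coefficient in $\delta_3(\xi)$ is a multiple of the relevant $d(i)$.

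Two caveats remain. First, calling $u$ admissible in Tahara's sense needs the column comparison $M_{ij}=-\delta_2(g_{ij}^{(k)})$ and $S_{ij}=-\delta_3(g_{ij}^{(k)})$, the latter via (2.25); you assert this without proof. Second, the equality $\Theta_{P_k}(\xi)=h_k$ is only as reliable as the pair values it is computed from.
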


\subsection*{Tahara normalization data}

The invariant factors of \(\mathsf A_k\) and \(\gamma_2(P_k)/\gamma_3(P_k)\),
together with the group order, give \(|\gamma _3(P_k)|=2^{k+4}=T\), and
Tahara's relations give \(\gamma _3(P_k)=\langle x_{21}^4\rangle\).  Put
\[
f_j=x_{2j}\gamma _3(P_k)
\qquad(1\le j\le4).
\]
Tahara's power relations give, in \(\gamma _2(P_k)/\gamma _3(P_k)\),
\[
\begin{aligned}
p_1&=2^{k-2}f_3+2^{2k-1}f_4,\\
p_2&=2^{2k-2}f_2+2^{2k}f_4,\\
p_3&=-2^{2k-1}f_2+2^kf_3,
\end{aligned}
\tag{2.15}
\]
and \(p_4\) has zero \(f_3\)-coordinate, giving (2.12a). The same
relations give the pair values
\[
\begin{aligned}
\rho_{12}^{H_3}
 &=x_{21}^{2^{k+4}}=z^{T/4},\\
\rho_{13}^{H_3}
 &=x_{21}^{-2^{k+5}}=z^{-T/2},\\
\rho_{23}^{H_3}
 &=x_{21}^{-2^{k+4}}=z^{-T/4},
\end{aligned}
\tag{2.15a}
\]
while the other three pair values are trivial. Hence
\[
\prod_{i<j}(\rho_{ij}^{H_3})^{u_{ij}}
=z^{\frac T4(u_{12}-u_{23})-\frac T2u_{13}},
\tag{2.15b}
\]
which is (2.13). On the coordinate subspace \(u_{14}=u_{24}=u_{34}=0\),
Tahara's two admissibility conditions reduce to
\[
u_{12}=2u'_{12},\qquad
u_{23}=2u'_{23},\qquad
u'_{12}-u'_{23}\equiv0\pmod2,\qquad
u'_{12}-u_{13}\equiv0\pmod2.
\tag{2.15c}
\]
Taking \(u'_{12}=u'_{23}=u_{13}=1\) gives (2.14). Tahara's computation
on p.~393 identifies the resulting fourth dimension element, in agreement
with (2.11), as
\[
x_{24}^{2^{k+5}}=x_{21}^{2^{k+5}}
=(x_{21}^4)^{2^{k+3}}=z^{T/2}.
\tag{2.15d}
\]

\begin{lemma}[Compatibility of the HMP and Tahara maps]
\label{lemma-2.6-hmp-tahara-compatibility}

Put
\[
\bar P_k=P_k/\gamma _3(P_k),\qquad
B_k=\bar P_k'=\gamma _2(P_k)/\gamma _3(P_k),
\tag{2.16}
\]
and form \(K_{P_k}\) from the class-two group \(\bar P_k\) as in Theorem~\ref{theorem-2.3-hmp-map-degree-four}. For \(i<j\), set
\[
d_i=d(i),\qquad r_{ij}=d_j/d_i,
\qquad
g_{ij}^{(k)}
=\overline{\tau_{d_i}(e_i,r_{ij}e_j)}.
\tag{2.17}
\]
Then the decomposition (2.8) identifies an element
\(\xi=\sum_{i<j}u_{ij}g_{ij}^{(k)}\) with its six cyclic pair
coordinates, and defines a fixed coordinate isomorphism
\[
\operatorname{coord}_k:\mathsf A_k\widehat *\mathsf A_k
\xrightarrow{\ \cong\ }
\mathcal C_k:=\bigoplus_{i<j}C_{d_i}g_{ij}^{(k)}.
\tag{2.17a}
\]
With Tahara's mixed-tensor and symmetric-cube pair columns \(M_{ij}\) and
\(S_{ij}\), given in (2.22) and (2.24), define the Tahara admissible subgroup
\[
\mathcal L_k
:=
\left\{
u=(u_{ij})\in\mathcal C_k:
\sum_{i<j}u_{ij}M_{ij}=0,
\quad
\sum_{i<j}u_{ij}S_{ij}=0
\right\}.
\tag{2.17b}
\]
Then:

\begin{enumerate}
\def\labelenumi{\arabic{enumi}.}
\item
  \(\xi\in K_{P_k}\) if and only if \(u=(u_{ij})\) is an admissible Tahara
  relation vector.
\item
  Under these coordinates, the HMP map agrees with Tahara's central map:
  \[
  \Theta_{P_k}(\xi)=\eta_H(u)\in\gamma _3(P_k).
  \tag{2.18}
  \]
\item
  The maps \(\delta_2,\delta_3,\widetilde j,j,\kappa,\widetilde\Theta,\Theta\)
  and the subgroups \(K_X\) are natural. Hence the projection
  \(P=\prod_{k\ge2}P_k\to P_k\) carries \(K_P\) into \(K_{P_k}\) and
  commutes with \(\Theta\).
\end{enumerate}

Equivalently, the square
\[
\begin{CD}
K_{P_k}@>{\operatorname{coord}_k|_{K_{P_k}}}>{\cong}>\mathcal L_k\\
@V{\Theta_{P_k}}VV @VV{\eta_H}V\\
D_4(P_k)@>>>\gamma _3(P_k)
\end{CD}
\tag{2.19}
\]
commutes.
\end{lemma}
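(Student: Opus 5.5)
The plan is to prove the three items separately, working entirely in the coordinates fixed by (2.8) and (2.17a), and then to read off the commuting square (2.19).

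\textbf{Item 1.} I would compute $\delta_2$ and $\delta_3$ on the generators $g_{ij}^{(k)}$ using (2.5).

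For $\delta_2$, take $m=d_i$, $a=e_i$ and $b=r_{ij}e_j$. Then
\[
\delta_2\bigl(g_{ij}^{(k)}\bigr)=e_i\otimes\widetilde{(r_{ij}e_j)}^{\,d_i}-r_{ij}e_j\otimes\widetilde e_i^{\,d_i}.
\]
Now $\widetilde{(r_{ij}e_j)}^{\,d_i}$ agrees with $\overline{x_{1j}^{d_j}}=p_j$ up to commutator corrections, and $\widetilde e_i^{\,d_i}=p_i$. So $\delta_2(g_{ij}^{(k)})=e_i\otimes p_j-r_{ij}e_j\otimes p_i$ in $\mathsf A_k\otimes B_k$. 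The claim to check is that this is exactly Tahara's mixed-tensor column $M_{ij}$ of (2.22), expanded through (2.15).

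For $\delta_3$, I would show that $\binom{d_i}{2}s_3(e_i\otimes e_i\otimes r_{ij}e_j-e_i\otimes r_{ij}e_j\otimes r_{ij}e_j)$ is the column $S_{ij}$ of (2.24) in $\operatorname{SP}^3(\mathsf A_k)$.

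Linearity then gives $\delta_2(\xi)=\sum u_{ij}M_{ij}$ and $\delta_3(\xi)=\sum u_{ij}S_{ij}$. Hence $\xi\in K_{P_k}$ if and only if $u\in\mathcal L_k$. Since $\operatorname{coord}_k$ is an isomorphism, its restriction $K_{P_k}\to\mathcal L_k$ is one as well.

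\textbf{Main obstacle.} This matching step is where I expect the difficulty. Tahara's columns are written with his own normalizations, so I must verify three things:
\begin{enumerate}
\item his quotient modules coincide with $\mathsf A_k\otimes B_k$ and $\operatorname{SP}^3(\mathsf A_k)$;
\item the lifting ambiguity in $\widetilde{(r_{ij}e_j)}^{\,d_i}$ versus $x_{1j}^{d_j}$ vanishes;
\item the sign conventions agree.
\end{enumerate}
For point 2, I would first handle the ambiguity via class two: $(x^r)^{d}=x^{rd}$ exactly, so the difference lies only in the choice of lift of $e_j$. That choice changes the power by an element that the relations on $\mathsf A\widehat{*}\mathsf A$ and the well-definedness of $\delta_2$ in HMP already absorb.

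\textbf{Item 2.} This follows from Lemma~\ref{lemma-2.4-hmp-map-evaluation}. For the generator $g_{ij}^{(k)}$, choose the lifts $\widehat a=x_{1i}$ and $\widehat b=x_{1j}^{r_{ij}}$. Then
\[
\widetilde\Theta\bigl(g_{ij}^{(k)}\bigr)=[x_{1j}^{r_{ij}},x_{1i}^{d_i}]^{-1}=[x_{1i}^{d_i},x_{1j}^{r_{ij}}].
\]
In class three, commutators $[y,x^{r}]$ with $[y,x]\in\gamma_2$ expand as $[y,x]^{r}$ times $\gamma_3$-corrections of the form $[[y,x],x]^{\binom r2}$. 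Here $[x_{1i}^{d_i},x_{1j}]$ is already central modulo $\gamma_3$, since $x_{1i}^{d_i}\in\gamma_2$. So $[x_{1i}^{d_i},x_{1j}]$ is central, and the commutator becomes $[x_{1i}^{d_i},x_{1j}]^{r_{ij}}=\rho_{ij}^{H_3}$ by (2.13a).

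Since $\widetilde\Theta$ is a homomorphism on $\mathsf A\widehat{*}\mathsf A$, I get $\widetilde\Theta(\xi)=\prod(\rho_{ij}^{H_3})^{u_{ij}}$. By (2.15b) this equals $\eta_H(u)$. Corollary~\ref{corollary-2.3prime-theta-surjective} places the image in $D_4(P_k)$, which gives the commuting square (2.19). I would also sanity-check against (2.14): $\eta_H(2,1,2)=z^{T/4\cdot 0-T/2}=z^{T/2}=h_k$.

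\textbf{Item 3.} This is formal. Each of $\delta_2$, $\delta_3$, $\widetilde j$, $\kappa$ is natural: the first two by the formulas (2.5), $\widetilde j$ by HMP, and $\kappa$ by functoriality of the five-term sequence for the morphism of central extensions induced by a homomorphism of class-$\le3$ groups. Hence $\widetilde\Theta$ and the kernels $K_X$ are natural. The projection $P\to P_k$ is a surjection of groups of class at most three, since $P$ has class three because each factor does. So it induces the maps, carries $K_P$ into $K_{P_k}$, and commutes with $\Theta$.
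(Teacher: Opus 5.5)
Your proposal follows the paper's proof essentially step for step. You evaluate $\delta_2,\delta_3$ on the generators $g_{ij}^{(k)}$ with the lifts $x_{1i}$ and $x_{1j}^{r_{ij}}$, so that $(x_{1j}^{r_{ij}})^{d_i}=x_{1j}^{d_j}$ exactly. You get $\widetilde\Theta_{P_k}(g_{ij}^{(k)})=[x_{1i}^{d_i},x_{1j}]^{r_{ij}}=\rho_{ij}^{H_3}$ from Lemma~\ref{lemma-2.4-hmp-map-evaluation} and the centrality of $\gamma_3(P_k)$, and item 3 follows from naturality.

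The one computation you leave open is the $\delta_3$ match. The paper finds $\delta_2(g_{ij}^{(k)})=-M_{ij}$ and $\delta_3(g_{ij}^{(k)})=-S_{ij}$, rather than exactly $M_{ij}$ and $S_{ij}$; the sign is harmless for the kernel condition. The second equality is not immediate: (2.5) produces the coefficient $r_{ij}^2\binom{d_i}{2}$ on $e_i\vee e_j\vee e_j$, whereas Tahara's column carries $\binom{d_j}{2}$. The paper reconciles the two using
\[
\binom{d_j}{2}-r_{ij}^2\binom{d_i}{2}=d_i\,\frac{r_{ij}(r_{ij}-1)}{2}
\]
together with $d_i(e_i\vee e_j\vee e_j)=0$ in $\operatorname{SP}^3(\mathsf A_k)$.
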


\begin{proof}
Fix \(i<j\), abbreviate \(d=d_i\), \(r=r_{ij}\), and choose in
\(\bar P_k\) the lifts \(x_{1i}\) of \(e_i\) and \(x_{1j}^r\) of
\(re_j\). By definition
\[
x_{1i}^{d}=p_i,\qquad (x_{1j}^{r})^d=x_{1j}^{d_j}=p_j
\quad\text{in }B_k.
\tag{2.20}
\]
Substitution into the first formula of (2.5) gives
\[
\delta _2(g_{ij}^{(k)})
=e_i\otimes p_j-r e_j\otimes p_i.
\tag{2.21}
\]
Equivalently,
\[
\delta _2(g_{ij}^{(k)})=-M_{ij},
\qquad
M_{ij}=r e_j\otimes p_i-e_i\otimes p_j.
\tag{2.22}
\]
The second formula of (2.5) gives
\[
\delta _3(g_{ij}^{(k)})
=r\binom d2(e_i\vee e_i\vee e_j)
-r^2\binom d2(e_i\vee e_j\vee e_j).
\tag{2.23}
\]
Tahara's symmetric-cube pair column is
\[
S_{ij}
=\binom{d_j}2(e_i\vee e_j\vee e_j)
-r\binom d2(e_i\vee e_i\vee e_j).
\tag{2.24}
\]
The two coefficients are related by the identity
\[
\binom{rd}2-r^2\binom d2
=d\,\frac{r(r-1)}2.
\tag{2.25}
\]
The element \(e_i\vee e_j\vee e_j\) has order dividing \(d_i=d\) in
\(\operatorname{SP}^3(\mathsf A_k)\), so (2.25) implies
\[
\delta _3(g_{ij}^{(k)})=-S_{ij}.
\tag{2.26}
\]
The six cyclic summands in (2.8) are independent, and therefore
\[
\xi\in\ker\delta _2\cap\ker\delta _3
\iff
\sum_{i<j}u_{ij}M_{ij}=0
\text{ and }
\sum_{i<j}u_{ij}S_{ij}=0,
\tag{2.27}
\]
which is Tahara's admissibility condition.

For the second assertion, formula
(2.6j), applied to the chosen lifts, gives
\[
\widetilde\Theta_{P_k}(g_{ij}^{(k)})
=[x_{1j}^{r},x_{1i}^{d}]^{-1}
=[x_{1i}^{d},x_{1j}]^{r}
=\rho_{ij}^{H_3}.
\tag{2.28}
\]
The middle equality uses the centrality of \(\gamma _3(P_k)\); the
inversion in (2.6f) matches Tahara's commutator convention. For
\(\xi=\sum_{i<j}u_{ij}g_{ij}^{(k)}\in K_{P_k}\), since \(\widetilde\Theta_{P_k}\)
is a homomorphism,
\[
\Theta_{P_k}(\xi)
=\widetilde\Theta_{P_k}(\xi)
=\prod_{i<j}
\left([x_{1j}^{r_{ij}},x_{1i}^{d_i}]^{-1}\right)^{u_{ij}}
=\prod_{i<j}(\rho_{ij}^{H_3})^{u_{ij}}
=\eta_H(u),
\tag{2.29}
\]
proving the commuting square.

Naturality of the HMP maps and of the five-term transgression gives (3).
\end{proof}

\section{A relative-bar description of the kernel}\label{relative-bar-kernel}

Define
\[
L_n^{\rm bar}
=
\left\{
(z_E,c_Q)\in
Z_2(E_n)\oplus\overline C_3(Q_n):
(\pi_n)_\#z_E+\partial c_Q\in D_n^{\rm bar}
\right\}
\tag{3.1}
\]
and
\[
R_n^{\rm bar}:L_n^{\rm bar}\longrightarrow D_n^{\rm bar},
\qquad
R_n^{\rm bar}(z_E,c_Q)
=
(\pi_n)_\#z_E+\partial c_Q.
\tag{3.2}
\]
The three-chain \(c_Q\) records the boundary ambiguity in passing from
normalized cycles to \(H_2(Q_n)\).

\begin{lemma}\label{lemma-3.1-relative-bar-image}

Under \(q_n:D_n^{\rm bar}\to F_n\),
\[
q_n(\operatorname{im}R_n^{\rm bar})
=
F_n\cap
\operatorname{im}[H_2(E_n)\to H_2(Q_n)],
\tag{3.3}
\]
and
\[
B_2(Q_n)\subseteq\operatorname{im}R_n^{\rm bar}.
\tag{3.4}
\]
\end{lemma}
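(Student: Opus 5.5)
The plan is to work directly from the definitions (3.1)--(3.2), using two standard facts about normalized bar chains. First, \((\pi_n)_\#:\overline C_\bullet(E_n)\to\overline C_\bullet(Q_n)\) is a chain map, so it sends \(Z_2(E_n)\) into \(Z_2(Q_n)\) and induces the map \(H_2(E_n)\to H_2(Q_n)\) on classes. One must check that a symbol \([x\mid y]\) with \(\pi_n(x)=1\) or \(\pi_n(y)=1\) maps to zero; this is consistent because terms containing an identity are read as zero, and the normalized complex is a functorial quotient. Second, \(\partial c_Q\in B_2(Q_n)\subseteq Z_2(Q_n)\), and \(q_n\) kills \(B_2(Q_n)\). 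The statement is then a diagram chase.

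For the inclusion ``\(\subseteq\)'' in (3.3), take \((z_E,c_Q)\in L_n^{\rm bar}\). By definition \(R_n^{\rm bar}(z_E,c_Q)\in D_n^{\rm bar}=q_n^{-1}(F_n)\), so its class lies in \(F_n\). That class equals \(q_n((\pi_n)_\#z_E)+q_n(\partial c_Q)=(\pi_n)_*[z_E]\), because the boundary term dies. Hence it also lies in \(\operatorname{im}[H_2(E_n)\to H_2(Q_n)]\).

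For ``\(\supseteq\)'', take \(x\in F_n\) with \(x=(\pi_n)_*[z_E]\) for some \(z_E\in Z_2(E_n)\). The cycle \((\pi_n)_\#z_E\) then represents \(x\in F_n\), so it already lies in \(D_n^{\rm bar}\). Therefore \((z_E,0)\in L_n^{\rm bar}\), and \(q_nR_n^{\rm bar}(z_E,0)=x\).

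For (3.4), take any \(b=\partial c_Q\in B_2(Q_n)\). Then \(q_n(b)=0\in F_n\), so \(b\in D_n^{\rm bar}\), and \((0,c_Q)\in L_n^{\rm bar}\) satisfies \(R_n^{\rm bar}(0,c_Q)=b\). Alternatively, (1.19) shows directly that \(\vartheta_{n,s}\) vanishes on \(B_2(Q_n)\), which again gives \(B_2(Q_n)\subseteq D_n^{\rm bar}\) via the description (1.27).

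I expect no real obstacle. The only point needing care is the compatibility of the normalized chain map \((\pi_n)_\#\) with degenerate symbols and with the identification \(H_2=Z_2/B_2\) in (1.10). This compatibility is what justifies writing \(q_n((\pi_n)_\#z_E)=(\pi_n)_*[z_E]\).
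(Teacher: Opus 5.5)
Your proposal is correct and follows the paper's diagram chase. The one difference is in the inclusion ``\(\supseteq\)'': you take \(c_Q=0\), using that \(D_n^{\rm bar}=q_n^{-1}(F_n)\) is a full preimage. The paper instead starts from an arbitrary representative \(d\in D_n^{\rm bar}\) and corrects it by a boundary \(\partial c_Q\), which is equivalent in the presence of (3.4). You also check explicitly that \((0,c_Q)\in L_n^{\rm bar}\), a point the paper leaves implicit.
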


\begin{proof}
For \((z_E,c_Q)\in L_n^{\rm bar}\), the class of
\(R_n^{\rm bar}(z_E,c_Q)\) in \(H_2(Q_n)\) is the image of \([z_E]\),
which proves one inclusion in (3.3). For the other, take \(x\) in the
right-hand side. Choose \(d\in D_n^{\rm bar}\) representing
\(x\), and choose \(z_E\in Z_2(E_n)\) whose homology class maps to
\(x\). Then
\[
d-(\pi_n)_\#z_E\in B_2(Q_n),
\]
so it equals \(\partial c_Q\) for some \(c_Q\in\overline C_3(Q_n)\).
Thus \(d=R_n^{\rm bar}(z_E,c_Q)\). Also,
\(\partial c_Q=R_n^{\rm bar}(0,c_Q)\) for every three-chain \(c_Q\).
\end{proof}

\begin{theorem}\label{theorem-3.2-relative-bar-realization}

For every group, prescribed \(N\)-series and degree \(n\),
\[
\operatorname{coker}R_n^{\rm bar}
\cong
\ker\phi_n.
\tag{3.5}
\]
\end{theorem}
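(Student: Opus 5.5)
The plan is to compose three identifications. First, \(\operatorname{coker}R_n^{\rm bar}=D_n^{\rm bar}/\operatorname{im}R_n^{\rm bar}\) will be passed down to homology via \(q_n\). Second, the resulting quotient of \(F_n\) will be identified with \(\kappa_n(F_n)\) using exactness of (1.7). Third, Lemma~\ref{lemma-2.2-kernel-transgression} identifies \(\kappa_n(F_n)\) with \(\ker\phi_n\).

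Concretely, consider the surjection \(q_n:D_n^{\rm bar}\twoheadrightarrow F_n\). It is surjective because \(D_n^{\rm bar}=q_n^{-1}(F_n)\) by (1.27) and \(q_n\) is onto \(H_2(Q_n)\). Its kernel is \(B_2(Q_n)\cap D_n^{\rm bar}=B_2(Q_n)\). This uses that \(B_2(Q_n)\subseteq D_n^{\rm bar}\), which holds because \(\vartheta_{n,s}\) kills boundaries by (1.19).

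By (3.4), \(\ker q_n\subseteq\operatorname{im}R_n^{\rm bar}\). Hence \(q_n\) induces an isomorphism
\[
\operatorname{coker}R_n^{\rm bar}\;\cong\;F_n/q_n(\operatorname{im}R_n^{\rm bar})
=F_n\big/\bigl(F_n\cap\operatorname{im}[H_2(E_n)\to H_2(Q_n)]\bigr),
\]
where the equality is (3.3) of Lemma~\ref{lemma-3.1-relative-bar-image}. This step is a routine third-isomorphism argument.

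Next, restrict the transgression \(\kappa_n\) to \(F_n\). By exactness of (1.7) at \(H_2(Q_n)\), \(\ker\kappa_n=\operatorname{im}[H_2(E_n)\to H_2(Q_n)]\). Therefore \(\ker(\kappa_n|_{F_n})=F_n\cap\operatorname{im}[H_2(E_n)\to H_2(Q_n)]\), and \(\kappa_n\) induces an isomorphism
\[
F_n\big/\bigl(F_n\cap\operatorname{im}[H_2(E_n)\to H_2(Q_n)]\bigr)\xrightarrow{\ \cong\ }\kappa_n(F_n).
\]
Finally, (2.2a) gives \(\kappa_n(F_n)=\ker\phi_n\). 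Composing the three isomorphisms yields (3.5). Explicitly, the map sends the class of \(d\in D_n^{\rm bar}\) to \(\kappa_n(q_n(d))\), that is, to the product of factor-set values \(\prod\alpha_n(q,q')^{m_{q,q'}}\) by (1.23).

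The only point needing care is the exactness input at \(H_2(Q_n)\) in (1.7) with the factor-set sign convention (1.23). A sign change does not affect kernels or images, so this is harmless. The other point is that (1.27) makes \(D_n^{\rm bar}\) independent of the section, which follows from (1.22). The substantive content, the inclusion \(\ker\phi_n\subseteq\operatorname{im}\kappa_n\) via Losey's theorem, is already packaged in Lemma~\ref{lemma-2.2-kernel-transgression}. I therefore expect no real obstacle beyond bookkeeping the identification \(\ker q_n=B_2(Q_n)\subseteq\operatorname{im}R_n^{\rm bar}\).
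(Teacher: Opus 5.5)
Your proposal is correct and follows the paper's proof: Lemma~\ref{lemma-3.1-relative-bar-image} with (3.4) gives \(\operatorname{coker}R_n^{\rm bar}\cong F_n/\bigl(F_n\cap\operatorname{im}[H_2(E_n)\to H_2(Q_n)]\bigr)\), exactness of (1.7) and the first isomorphism theorem identify this with \(\kappa_n(F_n)\), and (2.2a) finishes. Your explicit check that \(q_n\colon D_n^{\rm bar}\to F_n\) is onto with kernel \(B_2(Q_n)\subseteq\operatorname{im}R_n^{\rm bar}\) simply spells out the step the paper compresses into ``by Lemma~\ref{lemma-3.1-relative-bar-image} and (3.4).''
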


\begin{proof}
By Lemma~\ref{lemma-3.1-relative-bar-image} and (3.4),
\[
\operatorname{coker}R_n^{\rm bar}
\cong
\frac{F_n}
{F_n\cap\operatorname{im}[H_2(E_n)\to H_2(Q_n)]}.
\tag{3.6}
\]
Exactness of (1.7) says
\[
\ker\kappa_n
=
\operatorname{im}[H_2(E_n)\to H_2(Q_n)].
\tag{3.7}
\]
The first isomorphism theorem gives
\[
\frac{F_n}{F_n\cap\ker\kappa_n}
\cong\kappa_n(F_n).
\tag{3.8}
\]
By Lemma~\ref{lemma-2.2-kernel-transgression}, the last group is \(\ker\phi_n\).
\end{proof}

\section{Finite separated criterion}\label{finite-separated-criterion}

Let \(G\) be finite, let
\[
G=H_1\ge H_2\ge\cdots\ge H_N=1,
\]
and suppose that the multiplication table of \(G\) and the subgroups \(H_i\)
are given explicitly.

\begin{theorem}\label{theorem-4.1-finite-criterion}
\[
A_\bullet\text{ realizes }H_\bullet
\iff
\operatorname{coker}R_n^{\rm bar}=0
\quad(1\le n<N).
\tag{4.1}
\]
The conditions on the right are effectively decidable by integer linear algebra.
\end{theorem}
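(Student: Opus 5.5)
The equivalence in (4.1) will follow from Theorem~\ref{theorem-3.2-relative-bar-realization}, which gives \(\operatorname{coker}R_n^{\rm bar}\cong\ker\phi_n\) in every degree. We then restrict attention to the degrees \(1\le n<N\).

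For \(n\ge N\) we have \(H_n=H_{n+1}=1\), so \(C_n=0\) and \(\phi_n\) is trivially injective. Hence \(A_\bullet\) realizes \(H_\bullet\) exactly when \(\ker\phi_n=0\) for \(1\le n<N\), which is the right side of (4.1). Proposition~\ref{proposition-1.1-equivalence-with-lazards-dimension-equality} is not needed here: realizing is defined as injectivity of every \(\phi_n\).

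For decidability, fix \(n<N\). We make every object in the definition of \(R_n^{\rm bar}\) a finitely generated free abelian group with computable integer matrices.

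\begin{enumerate}
\item Since \(G\) is finite, so are \(Q_n\) and \(E_n\). Their multiplication tables are computed from that of \(G\) and the listed subgroups \(H_n,H_{n+1}\). The normalized bar groups \(\overline C_r(Q_n)\) and \(\overline C_r(E_n)\) have finite bases, and the boundaries (1.9) are integer matrices. Then \(Z_2(E_n)\) is computed as an integer kernel by Hermite or Smith normal form.
\item The target \(A_n/A_{n+1}\) must be computed. Each \(A_m\) is a subgroup of the finite-rank lattice \(\mathbf Z[G]\cong\mathbf Z^{|G|}\). It is generated by the finitely many products (1.1) with \(r\) bounded: since \(H_N=1\) and the \(H_a\) are finite, only weights \(a<N\) with \(x_\nu\ne1\) occur. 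The issue is that \(r\) is a priori unbounded, since arbitrarily long products of weight \(\ge m\) are allowed.

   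To handle this, note that all such products lie in \(\Delta(G)^{m'}\) with weights summing to at least \(m\). We compute \(A_m\) as a submodule by saturation. Start with the lattice spanned by products of total weight in \([m,m+N)\) of length at most \(m+N\). Then close it under right multiplication by the finitely many elements \(x-1\), \(x\in H_a\). This closure terminates by the ascending chain condition on subgroups of \(\mathbf Z^{|G|}\), and it yields all generators, since any longer product is obtained from a shorter one of weight \(\ge m\) by right multiplication. The same procedure gives \(A_{n+1}\). A basis of \(A_n\) with \(A_{n+1}\) expressed in it then presents \(A_n/A_{n+1}\) via Smith normal form.
\item Choose a section \(s_n\). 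Compute the factor set \(\alpha_n\) and the values \(\vartheta_{n,s}([q\mid q'])\) in that presentation. This gives \(D_n^{\rm bar}\) as the kernel of an integer map into a finitely presented abelian group, namely the kernel of \(Z_2(Q_n)\oplus(\text{relations})\to\) the lattice.
\item \(L_n^{\rm bar}\) is the preimage of \(D_n^{\rm bar}\) under an integer matrix, hence computable. \(\operatorname{im}R_n^{\rm bar}\subseteq D_n^{\rm bar}\) is then a sublattice, and the cokernel vanishes iff the two lattices coincide. Equivalently, the Smith normal form of the inclusion matrix has all invariant factors equal to \(1\) and full rank.
\end{enumerate}

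The main obstacle is the effective computation of \(A_n\) and \(A_{n+1}\) as lattices, given the unbounded length of products in (1.1). The saturation argument of step~2 is the first thing I would try. An alternative is to use that \(A_m\) is a two-sided ideal containing \(\Delta(G)^{M}\) only in favourable cases. Since \(G\) is finite, \(\Delta^M\) need not vanish, so that route fails and one needs the chain-condition argument. Everything else is routine Smith normal form over \(\mathbf Z\).
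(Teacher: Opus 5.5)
Your proposal is correct and is essentially the paper's proof. Theorem~\ref{theorem-3.2-relative-bar-realization} gives the equivalence, and degrees $n\ge N$ are vacuous. All bar lattices, $\vartheta_{n,s}$, $D_n^{\rm bar}$, $L_n^{\rm bar}$ and $\operatorname{im}R_n^{\rm bar}$ are computed by Hermite or Smith normal form. $A_m$ is generated by the finitely many prefixes whose weight first reaches $m$, closed under multiplication by $G$. The paper writes this ideal directly as the span of $\{gsh\}$; your iterative right-closure stabilizes after one step at the same lattice.

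Separately, your closing side remark is wrong, though harmless. Because $H_1=G$, one always has $\Delta(G)^m\subseteq A_m$. Hence $A_m$ is spanned by the finitely many weighted products of length at most $m$, and the route you dismiss works with no chain-condition argument.
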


\begin{proof}
By Theorem~\ref{theorem-3.2-relative-bar-realization}, realization is equivalent to the vanishing of
\(\operatorname{coker}R_n^{\rm bar}\) in every degree. Since \(H_N=1\),
only \(1\le n<N\) need be considered.

For a finite group, the normalized bar groups through degree three are free
abelian of finite rank. Their boundary maps are explicit integer matrices,
so Smith and Hermite normal forms compute the cycle and boundary lattices
and \(H_2(Q_n;\mathbf Z)\).

Choose a normalized section \(s_n:Q_n\to E_n\), compute its factor set
\(\alpha_n\) from the multiplication table, and choose representatives in
\(H_n\). Once the lattices \(A_n,A_{n+1}\subseteq\mathbf Z[G]\) have
been computed, formula (1.15) gives the integer matrix of
\(\vartheta_{n,s}\) directly on the normalized bar basis. Restricting this
matrix to \(Z_2(Q_n)\) and quotienting by \(B_2(Q_n)\) gives
\(\overline\vartheta_n\). Smith normal form then computes
\[
F_n=\ker\overline\vartheta_n.
\tag{4.1a}
\]
The same matrices compute \(D_n^{\rm bar}\), the pullback
\(L_n^{\rm bar}\), the map \(R_n^{\rm bar}\), and its cokernel.

After truncating weights larger than \(m\), the ideal \(A_m\) is generated
by products whose total weight first reaches \(m\) at the final factor;
every longer product belongs to the two-sided ideal generated by such a
prefix. Let \(S_m\) be the finite set of these prefixes. Since \(G\) is
finite, the two-sided ideal generated by \(S_m\) is the \(\mathbf Z\)-span
of
\[
\{gsh:g,h\in G,\ s\in S_m\}.
\tag{4.2}
\]
Writing these finitely many elements in the basis \(G\) of
\(\mathbf Z[G]\), Hermite normal form computes the lattice \(A_m\), completing
the finite algorithm.
\end{proof}

\section{Discrete and finite-level kernels}\label{discrete-finite-level-kernels}

Assume the hypotheses of Section~\ref{finite-quotient-saturation}.

\begin{lemma}\label{lemma-5.1-canonical-map}

The map
\[
\delta:\mathbf Z[G]\longrightarrow\varprojlim_j\mathbf Z[G_j]
\tag{5.1}
\]
is injective.
\end{lemma}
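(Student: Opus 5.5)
An element \(x\in\mathbf Z[G]\) is a finite sum \(\sum_{g\in S}m_g g\) with \(S\subseteq G\) finite and every \(m_g\ne0\). The plan is to find one finite level \(j\) at which the elements of \(S\) stay distinct, so that the image of \(x\) there cannot vanish. Injectivity of \(\delta\) then follows because \(\delta(x)=0\) forces every component \(x_j=\sum m_g q_j(g)\in\mathbf Z[G_j]\) to be zero.

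The key input is that the tower separates points. The group \(G=\varprojlim_j G_j\) is separated, and the tower of finite quotients is descending and cofinal. So the kernels \(N_j=\ker(q_j:G\to G_j)\) form a descending chain with \(\bigcap_j N_j=1\); this is just Hausdorffness of the inverse limit expressed through the defining projections. For each of the finitely many pairs \(g\ne g'\) in \(S\), the element \(g^{-1}g'\) is nontrivial and therefore lies outside \(N_{j(g,g')}\) for some index. Because the chain is descending, taking \(j\) to be the maximum of these finitely many indices gives a single level at which \(q_j\) is injective on \(S\).

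At that level, \(x_j=\sum_{g\in S}m_g\,q_j(g)\) is a combination of distinct group elements \(q_j(g)\), which form part of the standard \(\mathbf Z\)-basis of \(\mathbf Z[G_j]\). Its coefficients are the nonzero integers \(m_g\), so \(x_j\ne0\) and hence \(\delta(x)\ne0\). This also confirms, as needed, that \(\delta\) is well defined: it is the family of ring maps \(\mathbf Z[q_j]\), which are compatible with the surjective transition maps.

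I expect no real obstacle. The only point needing care is to state precisely why \(\bigcap_j N_j=1\). If \(q_j(g)=1\) for all \(j\), then the element \((q_j(g))_j\) of \(\varprojlim G_j\) is the identity, so \(g=1\) under the identification (1.28). Note that the argument never uses the \(N\)-series, and it works because \(\mathbf Z[G]\) consists of finite sums, with no completion involved.
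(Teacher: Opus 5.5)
Your proposal is correct and follows the same argument as the paper: pick a single finite level at which the finitely many group elements in the support stay distinct, then read off the nonzero coefficients in the group basis of $\mathbf Z[G_j]$. The only difference is that you spell out why one level suffices (the kernels $N_j$ descend and have trivial intersection), which the paper condenses into the phrase that the tower is separated.
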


\begin{proof}
Let
\[
a=\sum_{\nu=1}^rc_\nu g_\nu
\tag{5.2}
\]
with the \(g_\nu\) distinct. Since the tower is separated and the set of
pairs \((g_\mu,g_\nu)\) is finite, some quotient \(G_j\) separates all
\(g_\nu\). If \(\delta(a)=0\), its image in \(\mathbf Z[G_j]\) is a
linear combination of distinct basis elements and hence every
\(c_\nu=0\).
\end{proof}

Define the finite-level symbol map
\[
\phi_n^\pi:H_n/H_{n+1}\longrightarrow
\prod_j A_{n,j}/A_{n+1,j},
\qquad
hH_{n+1}\longmapsto
\bigl((q_j(h)-1)+A_{n+1,j}\bigr)_j.
\tag{5.2a}
\]

\begin{lemma}\label{lemma-5.2-exact-kernel-formulas}

For every \(n\),
\[
\ker\phi_n
=
\frac{H_n\cap D_{n+1}^{\rm alg}}{H_{n+1}},
\qquad
\ker\phi_n^\pi
=
\frac{H_n\cap D_{n+1}^{\rm fin}}{H_{n+1}}.
\tag{5.3}
\]
\end{lemma}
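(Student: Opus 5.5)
The first formula in (5.3) is exactly (1.4), so nothing new is needed there. By definition of \(\phi_n\), a class \(hH_{n+1}\) with \(h\in H_n\) lies in \(\ker\phi_n\) precisely when \(h-1\in A_{n+1}\), that is, when \(h\in G\cap(1+A_{n+1})=D_{n+1}^{\rm alg}\). The map \(\phi_n\) is a well-defined homomorphism on \(H_n/H_{n+1}\), as checked after (1.3). Also \(H_{n+1}\subseteq D_{n+1}^{\rm alg}\) because \(H_{n+1}-1\subseteq A_{n+1}\). Hence the kernel, viewed as a subgroup of \(H_n/H_{n+1}\), is \((H_n\cap D_{n+1}^{\rm alg})/H_{n+1}\).

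For the second formula I will argue in the same way, componentwise. First, \(\phi_n^\pi\) is a well-defined homomorphism. Each component is the map \(\phi_{n,j}\) for the finite group \(G_j\) with the series \(H_{\bullet,j}\), precomposed with \(H_n/H_{n+1}\to H_{n,j}/H_{n+1,j}\) induced by \(q_j\). This precomposition makes sense because \(q_j(H_a)=H_{a,j}\) by (1.28a). The computation after (1.3), applied in \(\mathbf Z[G_j]\), shows each \(\phi_{n,j}\) is a homomorphism. Therefore \(hH_{n+1}\in\ker\phi_n^\pi\) if and only if \(q_j(h)-1\in A_{n+1,j}\) for every \(j\). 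By the second description in (1.31), this says exactly that \(h\in\bigcap_j q_j^{-1}\bigl(G_j\cap(1+A_{n+1,j})\bigr)=D_{n+1}^{\rm fin}\).

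The only point that needs care is the equality of the two expressions in (1.31). The second expression is the one used above, and it must agree with the definition \(D_m^{\rm fin}=G\cap(1+A_m^{\rm fin})\) via \(\delta\). I will check this directly. If \(\delta(h-1)\in\varprojlim_jA_{m,j}\), then every coordinate \(q_j(h)-1\) lies in \(A_{m,j}\). Conversely, if every coordinate \(q_j(h)-1\) lies in \(A_{m,j}\), the family \((q_j(h)-1)_j\) is automatically compatible, being the image of the single element \(h-1\). So it lies in the inverse limit of the subgroups, which sits inside \(\varprojlim_j\mathbf Z[G_j]\).

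It remains to see that \(H_{n+1}\subseteq D_{n+1}^{\rm fin}\). This holds because \(q_j(H_{n+1})-1=H_{n+1,j}-1\subseteq A_{n+1,j}\) by (1.2) in \(\mathbf Z[G_j]\). So the quotient \((H_n\cap D_{n+1}^{\rm fin})/H_{n+1}\) makes sense and equals \(\ker\phi_n^\pi\). I expect no real obstacle here; the proof is an unwinding of definitions.
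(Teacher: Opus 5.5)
Your proposal is correct and follows the paper's own proof. The first formula is (1.4), and the second holds because the finite symbol of \(hH_{n+1}\) vanishes exactly when \(q_j(h)-1\in A_{n+1,j}\) for every \(j\), that is, when \(h\in D_{n+1}^{\rm fin}\) by (1.31). Your additional checks are details the paper leaves implicit, and they are sound: well-definedness of \(\phi_n^\pi\), agreement of the two descriptions in (1.31), and \(H_{n+1}\subseteq D_{n+1}^{\rm fin}\).
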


\begin{proof}
The first equality is (1.4). The finite symbol of \(hH_{n+1}\) vanishes if
and only if \(q_j(h)-1\in A_{n+1,j}\) for every \(j\), equivalently if and
only if \(h\in D_{n+1}^{\rm fin}\).
\end{proof}

For the fixed tower, define the degree-\(n\) finite-quotient defect by
\[
\Omega_n
:=
\frac{\ker\phi_n^\pi}{\ker\phi_n}.
\tag{5.4}
\]
By (5.3) and the third isomorphism theorem,
\[
\Omega_n
\cong
\frac{H_n\cap D_{n+1}^{\rm fin}}
     {H_n\cap D_{n+1}^{\rm alg}}.
\tag{5.5}
\]

\section{Bounded width and descent}\label{bounded-width-descent}

\subsection{Losey expressions}\label{losey-expressions}

A degree-\((n+1)\) Losey expression for \(a\in A_{n+1}\) is an equality
\[
a=
\sum_{u=1}^sc_u
\prod_{v=1}^{r_u}(x_{u,v}-1),
\tag{6.1}
\]
where
\[
c_u\in\mathbf Z\setminus\{0\},\qquad
x_{u,v}\in H_{w_{u,v}},\qquad
\sum_vw_{u,v}\ge n+1.
\tag{6.2}
\]
We use reduced expressions, with zero-coefficient summands omitted; the
empty sum represents \(0\).

Weights larger than \(n+1\) may be replaced by \(n+1\). Define
\[
\|\mathcal W\|
:=
\sum_{u=1}^s|c_u|(1+r_u).
\tag{6.3}
\]
For \(h\in H_n\cap D_{n+1}^{\rm alg}\), let
\[
\mu_n(G,H_\bullet;h)
:=
\min\{\|\mathcal W\|:\mathcal W\text{ expresses }h-1\}.
\tag{6.4}
\]
For a compatible \(h=(h_j)\in H_n\cap D_{n+1}^{\rm fin}\), write
\[
\mu_{n,j}(h)
:=
\mu_n(G_j,H_{\bullet,j};h_j).
\tag{6.5}
\]
Projection to a quotient cannot increase this complexity; hence
\[
\mu_{n,1}(h)\le\mu_{n,2}(h)\le\cdots.
\tag{6.6}
\]

\begin{theorem}\label{theorem-6.1-discrete-descent-bounded-width}

For every compatible \(h\in H_n\cap D_{n+1}^{\rm fin}\),
\[
h\in D_{n+1}^{\rm alg}
\iff
\sup_j\mu_{n,j}(h)<\infty.
\tag{6.7}
\]
\end{theorem}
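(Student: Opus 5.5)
The forward implication is the easy one. If \(h\in D_{n+1}^{\rm alg}\), fix a single Losey expression \(\mathcal W\) for \(h-1\) in \(\mathbf Z[G]\), with factors \(x_{u,v}\in H_{w_{u,v}}\). Apply \(q_j\) factor by factor. Since \(H_{a,j}=q_j(H_a)\), each image \(q_j(x_{u,v})\) lies in \(H_{w_{u,v},j}\), so the image is a degree-\((n+1)\) Losey expression for \(h_j-1\) in \(\mathbf Z[G_j]\). Some summands may collapse to zero (when \(q_j(x_{u,v})=1\)) or combine. Reducing the expression by dropping zero summands and merging equal ones can only lower \(\|\cdot\|\), because \(|c+c'|\le|c|+|c'|\). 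Hence \(\mu_{n,j}(h)\le\|\mathcal W\|\) for all \(j\), and the supremum is finite.

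For the converse I will use a compactness argument on expression shapes. Suppose \(\mu_{n,j}(h)\le M\) for all \(j\). An expression of norm at most \(M\) has at most \(M\) summands, each of length at most \(M\), with \(|c_u|\le M\) and weights in \(\{1,\dots,n+1\}\) (after the truncation allowed above). So its combinatorial shape \(\sigma=(s,(r_u),(c_u),(w_{u,v}))\) ranges over a finite set \(\Sigma_M\). For each \(j\), let \(X_j(\sigma)\) be the set of tuples \((y_{u,v})\) with \(y_{u,v}\in H_{w_{u,v},j}\) that make \(\sum_u c_u\prod_v(y_{u,v}-1)=h_j-1\) in \(\mathbf Z[G_j]\). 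Each \(X_j(\sigma)\) is finite. The transition maps \(G_{j+1}\to G_j\) carry \(H_{a,j+1}\) into \(H_{a,j}\) and \(h_{j+1}\) to \(h_j\), so they send \(X_{j+1}(\sigma)\) into \(X_j(\sigma)\). Since some shape is nonempty at every level and \(\Sigma_M\) is finite, pigeonhole gives one shape \(\sigma\) with \(X_j(\sigma)\neq\varnothing\) for infinitely many \(j\). Mapping down the tower then shows it is nonempty for every \(j\).

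König's lemma, or equivalently nonemptiness of an inverse limit of nonempty finite sets, now gives a compatible family \((y_{u,v,j})_j\). Set \(x_{u,v}:=(y_{u,v,j})_j\in\varprojlim_jH_{w_{u,v},j}=H_{w_{u,v}}\) by (1.28a). The element \(a:=\sum_uc_u\prod_v(x_{u,v}-1)\in A_{n+1}\) satisfies \(\delta(a)=\delta(h-1)\) levelwise. Lemma~\ref{lemma-5.1-canonical-map} then gives \(a=h-1\), so \(h\in D_{n+1}^{\rm alg}\).

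The main obstacle is the step where levelwise solutions are assembled into a single discrete expression. Two points need care there. First, the shape must be fixed uniformly in \(j\); this is exactly why bounded width, rather than mere membership at each level, is needed. Second, the limit elements \(x_{u,v}\) must actually lie in \(H_{w}\) as a subgroup of \(G\). This uses the hypothesis that the \(N\)-series is closed and induced from the tower, and it uses that \(G\) is the full inverse limit, so compatible families of group elements are genuine elements of \(G\). I would also check that the reduction convention in (6.3) does not break the monotonicity (6.6) or the finiteness of \(\Sigma_M\). Both are handled by allowing non-reduced expressions in \(X_j(\sigma)\), which only enlarges the solution sets.
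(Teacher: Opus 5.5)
Your proof is correct and follows the paper's argument. For the forward direction you project one discrete Losey expression to every level. For the converse you run König's lemma (equivalently, an inverse limit of nonempty finite sets) on levelwise expressions of bounded complexity with fixed coefficients, factor slots and truncated weight labels, then assemble the branch via \(H_w=\varprojlim_j H_{w,j}\) and conclude with Lemma~\ref{lemma-5.1-canonical-map}. The only cosmetic difference is that you fix the shape by a separate pigeonhole step, whereas the paper puts all padded expressions of complexity at most \(\beta\) into one finitely branching tree and lets the König branch fix the shape.
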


\begin{proof}
A Losey expression for \(h-1\) in \(\mathbf Z[G]\) projects to an
expression of the same complexity at every finite level.

For the converse, suppose all levels admit an expression of complexity at most
\(\beta\). Pad expressions by retaining factors that project to \(1\), so that
coefficients, factor slots and truncated weight labels are preserved. Let
\(\mathscr W_j(\beta)\) be the finite nonempty set of padded expressions of
complexity at most \(\beta\) representing \(h_j-1\). Factorwise projection maps
\(\mathscr W_{j+1}(\beta)\) to \(\mathscr W_j(\beta)\).

These sets form an infinite finitely branching rooted tree. K\"onig's lemma
gives a compatible branch with fixed coefficients, slots and weight labels.
Each slot defines an element of \(G=\varprojlim_jG_j\); a slot of weight \(w\)
lies in \(H_w=\varprojlim_jH_{w,j}\). The branch therefore defines a Losey
expression \(\mathcal W\in\mathbf Z[G]\), after deleting zero summands.

Its image is \(h_j-1\) at every finite level. By Lemma~\ref{lemma-5.1-canonical-map},
\(\mathcal W=h-1\) in \(\mathbf Z[G]\), so
\(h\in D_{n+1}^{\rm alg}\).
\end{proof}

\begin{corollary}\label{corollary-6.2-exact-pro-integral-criterion}
\[
\Omega_n=0
\iff
\text{every }h\in H_n\cap D_{n+1}^{\rm fin}
\text{ has bounded finite-level Losey width}.
\tag{6.8}
\]

\end{corollary}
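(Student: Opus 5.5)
The corollary follows from (5.5) together with Theorem~\ref{theorem-6.1-discrete-descent-bounded-width}, applied one element at a time. By (5.5), \(\Omega_n\) is the quotient of \(H_n\cap D_{n+1}^{\rm fin}\) by \(H_n\cap D_{n+1}^{\rm alg}\). So \(\Omega_n=0\) exactly when the inclusion \(H_n\cap D_{n+1}^{\rm alg}\subseteq H_n\cap D_{n+1}^{\rm fin}\) is an equality.

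First we check that the inclusion holds, so that (5.5) really is a quotient of a group by a subgroup. Take \(h\in D_{n+1}^{\rm alg}\), so \(h-1\) admits a Losey expression in \(\mathbf Z[G]\). Each factor \(x-1\) with \(x\in H_w\) maps to \(q_j(x)-1\) with \(q_j(x)\in H_{w,j}\). Hence \(q_j(h)-1\in A_{n+1,j}\) for every \(j\). By (1.31) this gives \(h\in D_{n+1}^{\rm fin}\). This is also the forward direction of Theorem~\ref{theorem-6.1-discrete-descent-bounded-width}.

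Next, unwind the condition \(\Omega_n=0\): it says that every \(h\in H_n\cap D_{n+1}^{\rm fin}\) lies in \(D_{n+1}^{\rm alg}\). Every element of \(G=\varprojlim_jG_j\) is a compatible family \((h_j)\), so each such \(h\) is a compatible element of \(H_n\cap D_{n+1}^{\rm fin}\). Theorem~\ref{theorem-6.1-discrete-descent-bounded-width} therefore applies to it. It says \(h\in D_{n+1}^{\rm alg}\) if and only if \(\sup_j\mu_{n,j}(h)<\infty\), which is what "bounded finite-level Losey width" means in (6.8). Quantifying over all \(h\in H_n\cap D_{n+1}^{\rm fin}\) turns the elementwise equivalence into the equivalence (6.8).

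No step here is a real obstacle; the substantive work is the König's-lemma argument already carried out in Theorem~\ref{theorem-6.1-discrete-descent-bounded-width}. The one point needing care is that \(\mu_{n,j}(h)\) is defined for every \(j\). This holds because \(h\in D_{n+1}^{\rm fin}\) gives \(h_j-1\in A_{n+1,j}\) at each level, and \(h\in H_n\) gives \(h_j\in H_{n,j}\). So the supremum in (6.7) is taken over finite quantities, and "bounded width" is meaningful for every element under consideration.
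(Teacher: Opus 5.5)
Your proposal is correct and takes the same route as the paper: (5.5) reduces \(\Omega_n=0\) to the inclusion \(H_n\cap D_{n+1}^{\rm fin}\subseteq D_{n+1}^{\rm alg}\), and Theorem~\ref{theorem-6.1-discrete-descent-bounded-width} then gives the equivalence with bounded width, one element at a time. Your extra checks are details the paper leaves implicit: that \(D_{n+1}^{\rm alg}\subseteq D_{n+1}^{\rm fin}\), so the quotient (5.4) makes sense, and that each \(\mu_{n,j}(h)\) is defined.
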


\begin{proof}
By (5.5), \(\Omega_n=0\) if and only if
\(H_n\cap D_{n+1}^{\rm fin}\subseteq D_{n+1}^{\rm alg}\).  Apply
Theorem~\ref{theorem-6.1-discrete-descent-bounded-width}.
\end{proof}

\section{The Tahara map and a parity constraint}\label{tahara-parity}

The parity obstruction in Theorem~\ref{theorem-8.3-failure-of-discrete-descent} rests on the following congruence.

\begin{lemma}\label{lemma-7.1-six-pair-congruence}

Every admissible Tahara relation vector satisfies
\[
u_{12}-u_{23}\equiv0\pmod4.
\tag{7.1}
\]
\end{lemma}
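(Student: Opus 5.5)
The congruence will come from the admissibility conditions themselves. By Lemma~\ref{lemma-2.6-hmp-tahara-compatibility}, an admissible vector \(u\in\mathcal L_k\) is one with \(\sum u_{ij}M_{ij}=0\) in \(\mathsf A_k\otimes B_k\) and \(\sum u_{ij}S_{ij}=0\) in \(\operatorname{SP}^3(\mathsf A_k)\). The plan is to extract (7.1) from a single coordinate of the mixed-tensor condition, supplemented if needed by one coordinate of the symmetric-cube condition. Unlike (2.15c), this cannot assume \(u_{14}=u_{24}=u_{34}=0\), so all six pairs must be carried along.

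First, I would project the mixed-tensor equation onto the summands \(e_i\otimes f_3\) of \(\mathsf A_k\otimes B_k\). Here \(e_i\otimes f_3\) is cyclic of order \(\gcd(d_i,2^{k+2})\), which is \(2^k,2^{k+2},2^{k+2},2^{k+2}\) for \(i=1,2,3,4\). By (2.22), \(M_{ij}=r_{ij}e_j\otimes p_i-e_i\otimes p_j\), and (2.12a) gives the \(f_3\)-coordinates: only \(p_1\) (with \(2^{k-2}\)) and \(p_3\) (with \(2^k\)) contribute. Note that \(r_{12}=4\), \(r_{13}=r_{14}=16\), \(r_{23}=r_{24}=4\), \(r_{34}=1\).

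I would then compute the \(e_j\otimes f_3\) components pair by pair:
\begin{itemize}
\item[] \(M_{12}\) contributes \(4\cdot2^{k-2}=2^k\) at \(e_2\otimes f_3\).
\item[] \(M_{13}\) contributes \(16\cdot 2^{k-2}\) at \(e_3\otimes f_3\) and \(-2^k\) at \(e_1\otimes f_3\). The latter is zero, since that summand has order \(2^k\).
\item[] \(M_{14}\) contributes \(2^{k+2}\equiv0\) at \(e_4\otimes f_3\).
\item[] \(M_{23}\) contributes \(-2^k\) at \(e_2\otimes f_3\).
\item[] \(M_{34}\) contributes \(2^k\) at \(e_4\otimes f_3\).
\end{itemize}
The \(e_2\otimes f_3\) coordinate, of order \(2^{k+2}\), then reads \(2^k(u_{12}-u_{23})\equiv0\pmod{2^{k+2}}\), which is exactly \(u_{12}\equiv u_{23}\pmod4\). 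Two checks remain here. The first is that \(e_2\otimes p_j\) for \(j=3,4\) and \(r e_j\otimes p_2\) contribute nothing to this coordinate, which holds because \([f_3]p_2=[f_3]p_4=0\). The second is that \(u_{ij}\) is only defined modulo \(d_i\), so the congruence must be compatible with \(u_{12}\) modulo \(2^k\) and \(u_{23}\) modulo \(2^{k+2}\). Since \(k\ge2\), the residue modulo \(4\) is well defined, so this is fine.

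The main obstacle is not the arithmetic. It is confirming that \(\mathsf A_k\otimes B_k\) splits with \(e_2\otimes f_3\) an honest cyclic direct summand of order \(2^{k+2}\), and that the coordinates in (2.15) are exact in the basis \(f_1,\dots,f_4\) rather than only modulo something. For this I would rely on the direct-sum decompositions (2.9) and (2.12). If a stray contribution appears (for example, signs from \(r\) versus \(r^2\) conventions), the fallback is to combine this coordinate with the symmetric-cube coordinate at \(e_1\vee e_2\vee e_3\)-type terms, which is how (2.15c) was obtained, and to read off the same mod-\(4\) relation.
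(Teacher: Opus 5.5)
Your proposal is correct and is essentially the paper's own argument: project the mixed-tensor condition \(\sum u_{ij}M_{ij}=0\) onto the cyclic summand \(e_2\otimes f_3\) of order \(2^{k+2}\). Only columns \(12\) and \(23\) contribute there, with coefficients \(4\cdot2^{k-2}=2^k\) and \(-2^k\), which gives \(2^k(u_{12}-u_{23})=0\) in \(C_{2^{k+2}}\). Your extra checks are sound (the other columns vanish there because \([f_3]p_2=[f_3]p_4=0\), and the residue mod \(4\) is well defined since \(k\ge2\)), and the symmetric-cube fallback is never needed.
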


\begin{proof}
Inspect the \(e_2\otimes f_3\)-coordinate in the mixed-tensor columns
(2.22); by (2.9) and (2.12) this summand of \(\mathsf A_k\otimes B_k\) has order
\(2^{k+2}\).  Only pair columns \(12\) and \(23\) contribute. By (2.12a), their
coefficients are respectively
\[
\frac{2^{k+2}}{2^k}\,2^{k-2}=2^k,
\qquad
-2^k.
\]
Admissibility therefore gives
\[
2^k(u_{12}-u_{23})=0\quad\text{in }C_{2^{k+2}},
\]
which is equivalent to (7.1).
\end{proof}

\begin{lemma}\label{lemma-7.2-central-value}

For every admissible Tahara relation vector \(u\),
\[
\eta_H(u)=z^{\frac T2u_{13}}.
\tag{7.2}
\]
\end{lemma}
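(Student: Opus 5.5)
The plan is to combine formula (2.13) with the congruence of Lemma~\ref{lemma-7.1-six-pair-congruence}, using only that $z$ has order exactly $T$ in $\gamma_3(P_k)\cong C_T$ by (2.10). For an admissible vector $u$, (2.13) gives
\[
\eta_H(u)=z^{\frac T4(u_{12}-u_{23})-\frac T2u_{13}},
\]
and the exponent only matters modulo $T$.

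First I apply Lemma~\ref{lemma-7.1-six-pair-congruence} to write $u_{12}-u_{23}=4t$ with $t\in\mathbf Z$. Then $\frac T4(u_{12}-u_{23})=Tt\equiv0\pmod T$, so the first term of the exponent drops out and $\eta_H(u)=z^{-\frac T2u_{13}}$.

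Next I note that $z^{T/2}$ has order $2$, so $z^{-\frac T2u_{13}}=z^{\frac T2u_{13}}$. This is (7.2).

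The only subtlety is well-definedness. The coordinates $u_{ij}$ live in the cyclic groups $C_{d_i}$ of (2.17a), so the exponents must be independent of the chosen integer lifts. For $u_{13}$, a change of lift by a multiple of $d_1=2^k\ge4$ changes $\frac T2u_{13}$ by a multiple of $T$, which is harmless. For $u_{12}$ and $u_{23}$, the moduli are $d_1=2^k$ and $d_2=2^{k+2}$, and both are multiples of $4$ because $k\ge2$. Hence the congruence (7.1) is meaningful for any lifts, and $\frac T4$ times a change of lift is again a multiple of $T$. Beyond this bookkeeping there is no real obstacle; the content is carried entirely by Lemma~\ref{lemma-7.1-six-pair-congruence}.
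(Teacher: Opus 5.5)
Your proof is correct and matches the paper's: Lemma~\ref{lemma-7.1-six-pair-congruence} makes $\frac T4(u_{12}-u_{23})$ a multiple of $T$, and $z^{-T/2}=z^{T/2}$ in $C_T$ then gives (7.2). Your extra check that the exponents do not depend on the integer lifts of the cyclic coordinates is valid (it uses $k\ge2$), and the paper leaves it implicit.
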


\begin{proof}
By Lemma~\ref{lemma-7.1-six-pair-congruence}, write \(u_{12}-u_{23}=4a\). Substitute into (2.13):
\[
\eta_H(u)
=z^{Ta-\frac T2u_{13}}
=z^{-\frac T2u_{13}}
=z^{\frac T2u_{13}}.
\tag{7.3}
\]
The last equality uses \(z^{-T/2}=z^{T/2}\) in \(C_T\).
\end{proof}

\begin{corollary}[Odd \((1,3)\)-coordinate]
\label{corollary-7.3-odd-coordinate}

Every
\[
\xi_k\in K_{P_k}
\quad\text{with}\quad
\Theta_{P_k}(\xi_k)=h_k
\tag{7.4}
\]
has odd coefficient on \(g_{13}^{(k)}\).
\end{corollary}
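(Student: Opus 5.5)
The plan is to transport the question to Tahara's coordinates and then read the answer off Lemma~\ref{lemma-7.2-central-value}. Take \(\xi_k\in K_{P_k}\) with \(\Theta_{P_k}(\xi_k)=h_k\). Using the coordinate isomorphism (2.17a), write \(\xi_k=\sum_{i<j}u_{ij}g_{ij}^{(k)}\), where each \(u_{ij}\) is determined modulo \(d_i\). In particular \(u_{13}\) is determined modulo \(d_1=2^k\). Since \(k\ge2\), this modulus is even, so the parity of \(u_{13}\) is well defined. This is the point to check first, because the conclusion ``odd coefficient on \(g_{13}^{(k)}\)'' only makes sense once we know the parity does not depend on the integer representative.

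The argument then has three steps.

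\textbf{Step 1.} By part~1 of Lemma~\ref{lemma-2.6-hmp-tahara-compatibility}, membership \(\xi_k\in K_{P_k}\) means exactly that \(u=(u_{ij})\) is an admissible Tahara relation vector.

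\textbf{Step 2.} By part~2 of that lemma, i.e. the commuting square (2.19),
\[
h_k=\Theta_{P_k}(\xi_k)=\eta_H(u).
\]
Lemma~\ref{lemma-7.2-central-value} then gives \(\eta_H(u)=z^{\frac T2u_{13}}\).

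\textbf{Step 3.} Theorem~\ref{theorem-2.5-tahara-family}, in (2.11), gives \(h_k=z^{T/2}\). Combining with Step 2,
\[
z^{\frac T2(u_{13}-1)}=1
\]
in the cyclic group \(\langle z\rangle\cong C_T\). This forces \(T\mid\frac T2(u_{13}-1)\), i.e. \(u_{13}-1\) is even, so \(u_{13}\) is odd.

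One consistency check remains. Lemma~\ref{lemma-7.2-central-value} must be applied to an integer representative of \(u\). Changing \(u_{13}\) by a multiple of \(2^k\) changes \(z^{\frac T2u_{13}}\) by a power of \(z^{T}=1\), so the computation does not depend on the representative. Likewise Lemma~\ref{lemma-7.1-six-pair-congruence}, which is used inside Lemma~\ref{lemma-7.2-central-value}, is stated for representatives, and it is insensitive to changing \(u_{12},u_{23}\) by multiples of \(d_1=2^k\) and \(d_2=2^{k+2}\), since \(4\mid 2^k\).

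I expect no real obstacle. The only delicate point is the well-definedness of parity modulo \(d_1\), and this relies on \(k\ge2\) (so that \(d_1\) is even) together with the fact that \(z\) has order exactly \(T\).
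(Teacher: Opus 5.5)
Your proposal is correct and follows essentially the same route as the paper: Lemma~\ref{lemma-2.6-hmp-tahara-compatibility} identifies the \(g_{13}^{(k)}\)-coefficient with Tahara's \(u_{13}\) and gives \(\Theta_{P_k}(\xi_k)=\eta_H(u)\). Lemma~\ref{lemma-7.2-central-value} and (2.11) then give \(z^{\frac T2u_{13}}=z^{T/2}\) in \(C_T\), so \(u_{13}\) is odd. Your extra checks make explicit what the paper leaves implicit: the parity is well defined because \(d_1=2^k\) is even, and the computation does not depend on the integer representatives chosen.
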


\begin{proof}
By Lemma~\ref{lemma-2.6-hmp-tahara-compatibility} the coefficient is \(u_{13}\). Equations (2.11) and (7.2)
give
\[
z^{\frac T2u_{13}}=z^{T/2}
\quad\text{in }C_T,
\tag{7.5}
\]
which is equivalent to \(u_{13}\equiv1\pmod2\).
\end{proof}

\begin{lemma}\label{lemma-7.4-product-lower-central}

For
\[
P=\prod_{k\ge2}P_k,\qquad h=(h_k),
\tag{7.6}
\]
one has
\[
\gamma_s(P)=\prod_{k\ge2}\gamma_s(P_k)
\qquad(1\le s\le3),
\tag{7.7}
\]
and in particular \(h\in\gamma_3(P)\).
\end{lemma}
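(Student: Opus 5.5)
The inclusion \(\gamma_s(P)\subseteq\prod_k\gamma_s(P_k)\) holds for every \(s\), because each projection \(P\to P_k\) is a homomorphism and so carries \(\gamma_s(P)\) into \(\gamma_s(P_k)\). The real content is the reverse inclusion. For an infinite product it is not automatic, since an element of \(\gamma_s(P)\) must be a product of a \emph{bounded} number of commutators. The plan is therefore to find a uniform bound \(N_s\) such that every element of \(\gamma_s(P_k)\) is a product of at most \(N_s\) simple commutators \([y_1,\dots,y_s]\) of weight \(s\), with \(N_s\) independent of \(k\). Given such a bound, take \((g_k)\in\prod_k\gamma_s(P_k)\) and write each \(g_k=\prod_{\nu=1}^{N_s}c_{k,\nu}\), padding with trivial commutators where needed. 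Each sequence \((c_{k,\nu})_k\) is then a weight-\(s\) commutator in \(P\), formed coordinatewise from its entries, and so \((g_k)\) is a product of \(N_s\) elements of \(\gamma_s(P)\).

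The case \(s=1\) is trivial. For \(s=3\), the group \(\gamma_3(P_k)=\langle z\rangle\) is cyclic and central by (2.10). Since \(P_k\) has class three, the map \((a,b,c)\mapsto[[a,b],c]\) is multiplicative in each variable modulo \(\gamma_4=1\). Hence \(\gamma_3(P_k)\) equals the set of products of basic triple commutators \([x_{1i},x_{1j},x_{1l}]^{e}\) in the four generators. Each such power is itself a single commutator \([x_{1i},x_{1j},x_{1l}^e]\), and there are at most \(4^3\) index triples. This gives \(N_3\le 64\), uniformly in \(k\).

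For \(s=2\) the argument is similar. In a class-three group, every element of \(\gamma_2(P_k)\) can be written as \(\prod_{i<j}[x_{1i},x_{1j}]^{e_{ij}}\cdot c\) with \(c\in\gamma_3(P_k)\), because \(\gamma_2/\gamma_3\) is generated by the images of the six basic commutators. The key step is to absorb the power into a single commutator: modulo \(\gamma_3\), bilinearity gives \([x_{1i},x_{1j}]^{e}\equiv[x_{1i},x_{1j}^{e}]\), and the error term lies in \(\gamma_3\). All such \(\gamma_3\)-errors, together with \(c\), can then be rewritten using at most \(64\) further commutators by the \(s=3\) case, and each of these is also a commutator of weight two. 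This yields \(N_2\le 6+64\), again uniform in \(k\). The only real obstacle is this uniformity, and it rests on the fixed number of generators (four) and the class bound, both of which are independent of \(k\).

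Finally, \(h=(h_k)\) with \(h_k=z^{T/2}\in\gamma_3(P_k)\) by (2.11) and (2.10). The case \(s=3\) of (7.7) then gives \(h\in\gamma_3(P)\).
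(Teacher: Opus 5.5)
Your proof is correct and follows essentially the same route as the paper. Both arguments use the four fixed generators and class three to write every element of $\gamma_3(P_k)$ (and of $\gamma_2(P_k)$ modulo $\gamma_3(P_k)$) as a product over a fixed finite list of basic commutators. Both then absorb the $k$-dependent exponents into a single entry by multilinearity, assemble the factors coordinatewise in $P$, and reduce $s=2$ to $s=3$.

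The differences are cosmetic. You place the exponent in the last slot rather than the first. You also rewrite the $\gamma_3$-residual as explicit weight-two commutators at each level, whereas the paper invokes the already proved equality $\gamma_3(P)=\prod_k\gamma_3(P_k)$.
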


\begin{proof}
Functoriality gives
\(\gamma_s(P)\subseteq\prod_k\gamma_s(P_k)\). For \(1\le i\le4\), let \(x_i\in P\) be the element with coordinates
\(x_{i,k}=x_{1i}\in P_k\).

Let \(y=(y_k)\in\prod_k\gamma_3(P_k)\). Since each \(P_k\) has class
three, \(\gamma_3(P_k)\) is central and is generated by the fixed finite
family of basic triple commutators
\([x_{i,k},x_{j,k},x_{\ell,k}]\). Write the corresponding exponents of
\(y_k\) as \(e_{ij\ell,k}\), and set
\[
a_{ij\ell}=(x_{i,k}^{e_{ij\ell,k}})_k\in P.
\]
In every group of class at most three and for every \(m\in\mathbf Z\),
the collection formulas, together with the centrality of
\(\gamma_3\), give
\[
[u^m,v,w]=[u,v,w]^m,
\qquad
[u^m,v]\equiv[u,v]^m\pmod{\gamma_3}.
\]
The first identity gives
\[
[a_{ij\ell},x_j,x_\ell]_k
=
[x_{i,k},x_{j,k},x_{\ell,k}]^{e_{ij\ell,k}}.
\tag{7.7a}
\]
Multiplying over the finitely many basic triples yields
\(y\in\gamma_3(P)\), and hence
\(\gamma_3(P)=\prod_k\gamma_3(P_k)\).

Now let \(y\in\prod_k\gamma_2(P_k)\). Modulo
\(\prod_k\gamma_3(P_k)=\gamma_3(P)\), each coordinate is a product of the
fixed basic weight-two commutators. If \(e_{ij,k}\) are the corresponding
exponents, the second identity shows that the commutators
\[
[(x_{i,k}^{e_{ij,k}})_k,x_j]
\]
realize these coordinates modulo \(\gamma_3(P)\). Their product therefore
differs from \(y\) by an element of \(\gamma_3(P)\), already treated above.
Therefore \(\gamma_2(P)=\prod_k\gamma_2(P_k)\).
\end{proof}

\begin{lemma}\label{lemma-7.5-augmentation-powers}

For the lower-central \(N\)-series \(H_s=\gamma_s(X)\),
\[
A_m(X,\gamma_\bullet)=\Delta(X)^m.
\tag{7.8}
\]
\end{lemma}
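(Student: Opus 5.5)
We prove the two inclusions separately, working directly from the definition (1.1) with \(H_s=\gamma_s(X)\).

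For \(A_m\subseteq\Delta(X)^m\), it suffices to treat a single generator \((x_1-1)\cdots(x_r-1)\) with \(x_\nu\in\gamma_{a_\nu}(X)\) and \(\sum a_\nu\ge m\). We use the classical inclusion \(\gamma_a(X)-1\subseteq\Delta(X)^a\). This follows by induction on \(a\) from the identity
\[
[x,y]-1=\bigl((x-1)(y-1)-(y-1)(x-1)\bigr)x^{-1}y^{-1},
\]
together with the fact that \(\{g\in X:g-1\in\Delta^a\}\) is a subgroup. The subgroup property comes from \(gk-1=(g-1)+(k-1)+(g-1)(k-1)\) and \(g^{-1}-1=-(g-1)g^{-1}\). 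With this in hand, each factor \(x_\nu-1\) lies in \(\Delta^{a_\nu}\), so the product lies in \(\Delta^{\sum a_\nu}\subseteq\Delta^m\).

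For \(\Delta(X)^m\subseteq A_m\), recall that \(\Delta(X)\) is additively spanned by the elements \(x-1\) with \(x\in X\). Hence \(\Delta(X)^m\) is additively spanned by the products \((x_1-1)\cdots(x_m-1)\) with all \(x_\nu\in X=\gamma_1(X)\). Each such product is a generator of \(A_m\) with every weight \(a_\nu=1\) and total weight exactly \(m\).

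The only step requiring care is the inclusion \(\gamma_a-1\subseteq\Delta^a\). For the induction we take \(x\in\gamma_{a-1}\) and \(y\in X\), so that \(x-1\in\Delta^{a-1}\) and \(y-1\in\Delta\). The displayed commutator identity then places \([x,y]-1\) in \(\Delta^a\), using that \(\Delta^a\) is a two-sided ideal. Closure under products and inverses, shown above, passes this from the generating commutators to all of \(\gamma_a\). Alternatively, one can simply cite (1.2) applied to the lower central series, once the inclusion \(H_m-1\subseteq A_m\) is combined with the second inclusion. However, (1.2) only gives \(\gamma_a-1\subseteq A_a\), so the direct induction above is what actually closes the first inclusion.
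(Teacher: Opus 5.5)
Your proof is correct and follows the paper's argument: the commutator identity and induction give $\gamma_s(X)-1\subseteq\Delta(X)^s$, hence $A_m\subseteq\Delta(X)^m$, and the reverse inclusion comes from expanding $\Delta(X)^m$ into products of $m$ weight-one factors $(g_i-1)$. You also spell out the subgroup-closure step (via $gk-1=(g-1)+(k-1)+(g-1)(k-1)$ and $g^{-1}-1=-(g-1)g^{-1}$) that the paper's phrase ``induction on $s$'' leaves implicit; your closing remark about citing (1.2) is unnecessary, as you yourself note.
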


\begin{proof}
The identity
\[
[x,y]-1=\bigl((x-1)(y-1)-(y-1)(x-1)\bigr)x^{-1}y^{-1}
\]
and induction on \(s\) give \(x-1\in\Delta(X)^s\) for
\(x\in\gamma_s(X)\). Hence
\(A_m\subseteq\Delta^m\). The reverse inclusion follows because
\(\Delta^m\) is generated by products of \(m\) weight-one terms
\((g_i-1)\).
\end{proof}

\begin{corollary}[\(D_4\) via the relative-bar cokernel]
\label{corollary-7.6-D4-via-bar-cokernel}

Let \(X\) be a group with \(\gamma_4(X)=1\). For the lower-central
\(N\)-series \(H_s=\gamma_s(X)\),
\[
D_4(X)\cong\operatorname{coker}R_3^{\rm bar}.
\tag{7.9}
\]
\end{corollary}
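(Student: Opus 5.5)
We apply Theorem~\ref{theorem-3.2-relative-bar-realization} with \(n=3\) to the lower-central \(N\)-series of \(X\). This gives
\[
\operatorname{coker}R_3^{\rm bar}\cong\ker\phi_3
=\frac{\gamma_3(X)\cap D_4^{\rm alg}(X,\gamma_\bullet)}{\gamma_4(X)}
\]
by (1.4). The proof then reduces to identifying the right-hand side with \(D_4(X)\).

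First, Lemma~\ref{lemma-7.5-augmentation-powers} gives \(A_4(X,\gamma_\bullet)=\Delta(X)^4\). By (1.3) and (1.4\('\)), this yields
\[
D_4^{\rm alg}(X,\gamma_\bullet)=X\cap(1+\Delta(X)^4)=D_4(X).
\]
Second, the hypothesis \(\gamma_4(X)=1\) makes the denominator trivial. Third, we need \(D_4(X)\subseteq\gamma_3(X)\), so that the intersection with \(\gamma_3(X)\) is all of \(D_4(X)\). Since \(D_4(X)\subseteq D_3(X)\), it suffices to know the classical fact \(D_3(X)=\gamma_3(X)\), which holds for every group. This fact was already used in the proof of Corollary~\ref{corollary-2.3prime-theta-surjective}.

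Combining the three steps gives \(\ker\phi_3\cong D_4(X)\), hence (7.9). The only input not proved in the paper is the third step, the equality \(D_3=\gamma_3\), and I would cite it as the standard result of Passi and Sandling rather than reprove it. Everything else is direct substitution into earlier results, so there is no substantive obstacle. One should also note that the isomorphism is canonical, namely \(\kappa_3\) applied to \(F_3\) via Lemma~\ref{lemma-2.2-kernel-transgression}. Here \(Q_3=X/\gamma_3(X)\) and \(E_3=X\), and \(\kappa_3\) is the transgression \(\kappa_X\) of (2.6d).
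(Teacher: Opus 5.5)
Your proposal is correct and is essentially the paper's proof. Like the paper, you apply Theorem~\ref{theorem-3.2-relative-bar-realization} with $n=3$, use Lemma~\ref{lemma-7.5-augmentation-powers} to identify $D_4^{\rm alg}(X,\gamma_\bullet)$ with $D_4(X)$, use $\gamma_4(X)=1$ to trivialize the denominator in (1.4), and quote $D_4(X)\subseteq D_3(X)=\gamma_3(X)$ as a standard fact; your added identification $E_3=X$, $Q_3=X/\gamma_3(X)$, $\kappa_3=\kappa_X$ is also accurate.
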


\begin{proof}
By Lemma~\ref{lemma-7.5-augmentation-powers}, \(A_m(X,\gamma_\bullet)=\Delta(X)^m\), so
\(D_m^{\rm alg}(X,\gamma_\bullet)=X\cap(1+\Delta(X)^m)=D_m(X)\) for
every \(m\), matching (1.4\('\)). By (1.4),
\[
\ker\phi_3=\frac{\gamma_3(X)\cap D_4(X)}{\gamma_4(X)}
=\gamma_3(X)\cap D_4(X).
\]
Since \(D_4(X)\subseteq D_3(X)=\gamma_3(X)\) for every group, the right
side is \(D_4(X)\). Theorem~\ref{theorem-3.2-relative-bar-realization} gives
\(\operatorname{coker}R_3^{\rm bar}\cong\ker\phi_3\), proving (7.9).
\end{proof}

\section{The exterior-torsion obstruction}\label{exterior-torsion-obstruction}

\subsection{Bounded-exponent image}\label{bounded-exponent-image}

For an abelian \(2\)-group \(\mathsf A\), put
\[
\mathcal T_M(\mathsf A)
:=
\operatorname{im}
\left(
\mathsf A[2^M]\widehat *\mathsf A[2^M]\to \mathsf A\widehat *\mathsf A
\right).
\tag{8.1}
\]

\begin{lemma}\label{lemma-8.1-pair-coordinate-divisibility}

Let
\[
\mathsf A=\bigoplus_{r=1}^sC_{2^{b_r}}e_r,
\qquad b_1\le\cdots\le b_s.
\tag{8.2}
\]
The \((i,j)\)-coordinate of \(\mathcal T_M(\mathsf A)\) in (2.8) is contained in
\[
2^{\max(b_i-M,0)}C_{2^{b_i}}.
\tag{8.3}
\]
\end{lemma}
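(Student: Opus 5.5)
We plan to compute the image of generators directly. The group $\mathsf A[2^M]\widehat *\mathsf A[2^M]$ is generated by classes $\overline{\tau_m(a,b)}$ with $a,b\in\mathsf A[2^M]$ and $ma=mb=0$. By the cyclic decomposition (2.8) applied to $\mathsf A[2^M]$, we may restrict $m$ to powers of $2$ dividing $2^M$ and use only pairs of cyclic generators of $\mathsf A[2^M]$. The subgroup $\mathsf A[2^M]$ decomposes as $\bigoplus_r C_{2^{\min(b_r,M)}}\,2^{\max(b_r-M,0)}e_r$. Hence, by (2.8), its exterior torsion square is generated by the symbols
\[
\tau_{2^{c_i}}\bigl(2^{\max(b_i-M,0)}e_i,\;2^{c_j-c_i}2^{\max(b_j-M,0)}e_j\bigr),\qquad c_r=\min(b_r,M),\ i<j.
\]
So it suffices to determine the coordinates of these images in $\mathsf A\widehat *\mathsf A$.

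We use bilinearity and naturality of $\operatorname{Tor}_1$, and so of the quotient $\widehat *$. Naturality lets us take the image under the inclusion $\mathsf A[2^M]\hookrightarrow\mathsf A$. The image is then a symbol $\tau_{m}(\lambda e_i,\mu e_j)$ in $\mathsf A$, where $m=2^{c_i}$ and $\lambda=2^{\max(b_i-M,0)}$. We rewrite it in terms of the generator $\tau_{2^{b_i}}(e_i,2^{b_j-b_i}e_j)$ using the standard Tor identities, both consequences of naturality for multiplication maps and for $C_m\to C_{m'}$:
\[
\tau_m(\lambda a,b)=\lambda\,\tau_m(a,b),\qquad \tau_{m}(a,b)=\tau_{mk}(a',b)\ \text{when } ka'=a.
\]
Concretely, $\tau_m(\lambda e_i,\mu e_j)$ has $(i,j)$-coordinate equal to $\lambda\mu'$ times the generator, where $\mu'$ is an integer accounting for how $\mu e_j$ relates to $2^{b_j-b_i}e_j$ at the appropriate order. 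The factor $\lambda=2^{\max(b_i-M,0)}$ then appears explicitly. Each $\tau$ of two multiples of basis vectors involves only the single $(i,j)$ summand, with the diagonal terms killed in $\widehat *$, so the other coordinates vanish.

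The main obstacle is this bookkeeping of the Tor identities when the orders differ, that is, checking the order-$2^{b_i}$ rescaling. The cleanest route is a coordinate-free viewpoint: $\tau_m(a,b)$ corresponds to the element $a\otimes b$ in $\mathsf A[m]\otimes\mathsf A[m]$ via the exact-sequence description of Tor. The $(i,j)$-coordinate map is then induced by the projection of $\mathsf A$ onto $C_{2^{b_i}}e_i\oplus C_{2^{b_j}}e_j$ followed by $\operatorname{Tor}(C_{2^{b_i}},C_{2^{b_j}})\cong C_{2^{b_i}}$.

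Finally, take $a\in\mathsf A[2^M]$ and project to $C_{2^{b_i}}$. The image lies in $C_{2^{b_i}}[2^M]=2^{\max(b_i-M,0)}C_{2^{b_i}}$. By naturality of $\operatorname{Tor}_1(-,C_{2^{b_j}})$, the element $\tau$ factors through $\operatorname{Tor}_1(2^{\max(b_i-M,0)}C_{2^{b_i}},C_{2^{b_j}})\to\operatorname{Tor}_1(C_{2^{b_i}},C_{2^{b_j}})$. This is the map $C_{\min(b_i,M)}$-cyclic $\to C_{2^{b_i}}$ given by multiplication by $2^{\max(b_i-M,0)}$, since the inclusion of a cyclic subgroup of index $2^t$ induces multiplication by $2^t$ on $\operatorname{Tor}$ with a group of larger exponent. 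This gives (8.3), and the remaining work is only checking this last Tor computation.
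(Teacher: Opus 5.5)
Your closing coordinate-free argument is correct, and it is essentially the paper's proof. Both reduce to the cross-pair generators $\tau_{2^{c_i}}(e'_i,2^{c_j-c_i}e'_j)$ of $\mathsf A[2^M]\widehat{*}\mathsf A[2^M]$, and then to the map on $\operatorname{Tor}_1(-,C_{2^{b_j}})$ induced by the inclusion $C_{2^{c_i}}e'_i\hookrightarrow C_{2^{b_i}}e_i$. The paper writes down the chain map of the resolutions $\mathbf Z\xrightarrow{2^a}\mathbf Z$, whose degree-one component is $1$. This shows that the induced map $N[2^{c_i}]\to N[2^{b_i}]$ is literally the inclusion. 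For $N=C_{2^{b_j}}$ with $b_j\ge b_i$, its image is the subgroup of order $2^{c_i}$, namely $2^{b_i-c_i}C_{2^{b_i}}$, which is your final claim.

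You should drop the ``concrete'' paragraph, though, because it is wrong as written.

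\begin{itemize}
\item The identity $\tau_m(\lambda a,b)=\lambda\,\tau_m(a,b)$ requires $ma=0$. When $b_i>M$ one has $2^{c_i}e_i\neq0$, so $\tau_{2^{c_i}}(e_i,\cdot)$ is not even defined.
\item The correct bookkeeping uses the slide relation $\tau_{2^{c_i}}(2^{b_i-c_i}e_i,y)=\tau_{2^{b_i}}(e_i,y)$. This absorbs $\lambda$ into the modulus and contributes no factor at all.
\item The factor $2^{b_i-c_i}$ actually comes from the second entry, $y=2^{b_j-c_i}e_j=2^{b_i-c_i}\cdot 2^{b_j-b_i}e_j$, by linearity at modulus $2^{b_i}$.
\item If $\mu'$ is read as this ratio, your $\lambda\mu'$ double-counts and gives $2^{2(b_i-c_i)}$. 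That is false, even though it happens to imply the containment.
\end{itemize}

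There is an even shorter proof. $\operatorname{Tor}_1(X,Y)$ is annihilated by any integer that annihilates $X$, so $\mathcal T_M(\mathsf A)$ is killed by $2^M$. Its image under the $(i,j)$-coordinate homomorphism therefore lies in $C_{2^{b_i}}[2^M]=2^{\max(b_i-M,0)}C_{2^{b_i}}$.
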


\begin{proof}
Put \(c_r=\min(b_r,M)\). Then
\[
\mathsf A[2^M]=\bigoplus_rC_{2^{c_r}}e'_r,
\qquad
e'_r=2^{b_r-c_r}e_r.
\tag{8.4}
\]
For \(i<j\), the cross-pair generator is
\[
\tau_{2^{c_i}}
\left(e'_i,2^{c_j-c_i}e'_j\right).
\tag{8.5}
\]
Use the free resolution
\[
0\longrightarrow\mathbf Z
\xrightarrow{\,2^a\,}\mathbf Z
\longrightarrow C_{2^a}\longrightarrow0.
\tag{8.6}
\]
The chain map induced by \(C_{2^{c_i}}\to C_{2^{b_i}}\),
\(1\mapsto2^{b_i-c_i}\), has degree-one component \(1\) and degree-zero
component \(2^{b_i-c_i}\); tensoring with the second cyclic map and
taking the Tor kernel sends (8.5) to
\[
2^{b_i-c_i}
\tau_{2^{b_i}}
\left(e_i,2^{b_j-b_i}e_j\right).
\tag{8.7}
\]
Since \(b_i-c_i=\max(b_i-M,0)\), this proves the asserted divisibility.
Quotienting by diagonal symbols cannot mix distinct unordered pair
coordinates.
\end{proof}

For \(\mathsf A_k\) and pair \((1,3)\),
\[
\operatorname{pr}_{13}\mathcal T_M(\mathsf A_k)
\subseteq
2^{\max(k-M,0)}C_{2^k}g_{13}^{(k)}.
\tag{8.8}
\]
Thus the \((1,3)\)-coordinate is even when \(k>M\).

\begin{lemma}\label{lemma-8.2-uniform-exponent-bound}

For every abelian group \(\mathsf A\) and \(\xi\in \mathsf A\widehat *\mathsf A\), there exists
\(M\) such that, for every finite abelian \(2\)-group quotient \(Q\) of
\(\mathsf A\), the image of \(\xi\) lies in \(\mathcal T_M(Q)\).
\end{lemma}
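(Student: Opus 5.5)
Write \(\xi\) as the image of a finite sum of symbols \(\sum_{\nu=1}^t\tau_{m_\nu}(a_\nu,b_\nu)\) in \(\operatorname{Tor}_1^{\mathbf Z}(\mathsf A,\mathsf A)\). This is possible because the Tor group of abelian groups is generated by such symbols, with \(m_\nu a_\nu=m_\nu b_\nu=0\). The idea is to bound the \(2\)-part of the orders \(m_\nu\) uniformly. Write \(m_\nu=2^{M_\nu}m'_\nu\) with \(m'_\nu\) odd, and put
\[
M:=\max_\nu M_\nu .
\]
This \(M\) depends only on the chosen expression of \(\xi\), not on any quotient.

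Now let \(\pi:\mathsf A\twoheadrightarrow Q\) be a finite abelian \(2\)-group quotient. By naturality of Tor, the image of \(\xi\) is \(\sum_\nu\overline{\tau_{m_\nu}(\pi a_\nu,\pi b_\nu)}\). In \(Q\) we have \(m_\nu\pi a_\nu=0\). Since \(Q\) is a \(2\)-group and \(m'_\nu\) is odd, this forces \(2^{M_\nu}\pi a_\nu=0\), and likewise for \(b_\nu\). Hence \(\pi a_\nu,\pi b_\nu\in Q[2^{M}]\).

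The key step is the standard change-of-order relation for Tor symbols: if \(m=m''m'\) and \(m''x=m''y=0\), then
\[
\tau_m(x,y)=m'\,\tau_{m''}(x,y).
\]
We apply it with \(m''=2^{M_\nu}\). Viewed in the slice category relation, this says \(\tau_{m}(x,y)\) is the image of \(m'\tau_{2^{M_\nu}}(x,y)\). Moreover, \(\tau_{2^{M_\nu}}(x,y)\) with \(x,y\in Q[2^M]\) is by definition the image of a symbol of \(Q[2^M]\widehat *Q[2^M]\) under the inclusion-induced map, using naturality of \(\tau\) in the pair of groups. Summing over \(\nu\) shows that the image of \(\xi\) lies in \(\mathcal T_M(Q)\).

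The main obstacle is justifying the change-of-order relation cleanly. My first attempt would use the cyclic resolution (8.6), as in the proof of Lemma~\ref{lemma-8.1-pair-coordinate-divisibility}. The symbol \(\tau_m(x,y)\) is the image of the generator of \(\operatorname{Tor}(C_m,C_m)\) under the map \(C_m\otimes C_m\to\mathsf A\otimes\mathsf A\) sending \(1\mapsto x,y\). The maps \(C_m\to Q\) factor through \(C_m\to C_{m''}\), \(1\mapsto1\). The chain map between the resolutions has degree-zero component \(1\) and degree-one component \(m'\). Computing the induced map on \(\operatorname{Tor}(C_m,C_m)\to\operatorname{Tor}(C_{m''},C_{m''})\) then gives multiplication by \(m'\) on the canonical generator. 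This computation must be checked against the paper's sign and normalization conventions for \(\tau\), but the factor is a unit times \(m'\), which suffices.

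Passing to the quotient \(\widehat *\) by the diagonal symbols is harmless, since the map \(Q[2^M]\widehat *Q[2^M]\to Q\widehat *Q\) carries diagonal symbols to diagonal symbols.
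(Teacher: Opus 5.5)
Your proposal is correct and follows essentially the paper's route: lift \(\xi\) to finitely many Eilenberg--Mac Lane symbols, take \(M\) to be the largest \(2\)-adic valuation of the moduli, use invertibility of odd integers on \(Q\) to place the projected entries in \(Q[2^M]\), and conclude by naturality. The paper first transports \(\xi\) into \(\operatorname{Tor}_1^{\mathbf Z}(t\mathsf A,t\mathsf A)\) via (8.8a); your direct use of the symbol presentation of \(\operatorname{Tor}_1^{\mathbf Z}(\mathsf A,\mathsf A)\) makes that detour unnecessary. Conversely, the step you flag as the main obstacle is not needed: once \(\pi a_\nu,\pi b_\nu\in Q[2^M]\), the symbol \(\tau_{m_\nu}(\pi a_\nu,\pi b_\nu)\) with its original modulus is already a symbol of \(\operatorname{Tor}_1^{\mathbf Z}(Q[2^M],Q[2^M])\) mapping to the projected one, which is exactly how the paper finishes. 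Your change-of-order identity \(\tau_{m''m'}(x,y)=m'\tau_{m''}(x,y)\) is nonetheless correct, being the standard relation \(\tau_{m''m'}(x,y)=\tau_{m''}(x,m'y)\) combined with bilinearity.
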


\begin{proof}
Let \(t\mathsf A\) be the torsion subgroup of \(\mathsf A\).  Since \(\mathbf Z\) is a
principal ideal domain, \(\mathsf A/t\mathsf A\) is flat and
\(\operatorname{Tor}^{\mathbf Z}_i=0\) for \(i\ge2\).  Applying the long
exact Tor sequence to
\[
0\longrightarrow t\mathsf A\longrightarrow \mathsf A\longrightarrow \mathsf A/t\mathsf A
\longrightarrow0
\]
first in the left variable with second argument \(t\mathsf A\), and then in the
right variable with first argument \(\mathsf A\), gives natural isomorphisms
\[
\operatorname{Tor}^{\mathbf Z}_1(t\mathsf A,t\mathsf A)
\xrightarrow{\ \cong\ }
\operatorname{Tor}^{\mathbf Z}_1(\mathsf A,t\mathsf A)
\xrightarrow{\ \cong\ }
\operatorname{Tor}^{\mathbf Z}_1(\mathsf A,\mathsf A).
\tag{8.8a}
\]
Choose \(\widetilde\xi\in\operatorname{Tor}^{\mathbf Z}_1(\mathsf A,\mathsf A)\) mapping
to \(\xi\), and let \(\eta\in\operatorname{Tor}^{\mathbf Z}_1(t\mathsf A,t\mathsf A)\)
be its preimage under (8.8a). The Eilenberg--Mac Lane presentation of
\(\operatorname{Tor}^{\mathbf Z}_1(t\mathsf A,t\mathsf A)\) is generated by symbols
\[
\tau_m(x,y),\qquad mx=my=0.
\tag{8.9}
\]
Represent \(\eta\) by a finite sum of such symbols. If \(\eta=0\), take
\(M=0\); otherwise let \(M\) be the largest \(2\)-adic valuation of the
moduli in this expression.
Write a modulus as \(m=2^vu\) with \(u\) odd.  Multiplication by \(u\) is
an automorphism of every finite abelian \(2\)-group \(Q\); hence
\(m\bar x=m\bar y=0\) implies
\(\bar x,\bar y\in Q[2^v]\subseteq Q[2^M]\). Naturality of
\(\operatorname{Tor}_1^{\mathbf Z}\) therefore factors the projected
symbol through
\[
\operatorname{Tor}_1(Q[2^M],Q[2^M])
\longrightarrow\operatorname{Tor}_1(Q,Q).
\tag{8.9a}
\]
Passing to the exterior quotient gives the same factorization in \(Q\widehat *Q\).
\end{proof}

\begin{theorem}[Failure of discrete descent]
\label{theorem-8.3-failure-of-discrete-descent}

Let
\[
P=\prod_{k\ge2}P_k,\qquad h=(h_k).
\tag{8.10}
\]
Equip \(P\) with its lower central series and with the tower of finite
subproducts
\[
P\longrightarrow P^{(j)}:=\prod_{2\le k\le j}P_k,
\tag{8.10a}
\]
and let \(D_4^{\rm fin}(P)\) be the finite-quotient saturation for this
tower.  Then
\[
h\in\gamma_3(P)\cap D_4^{\rm fin}(P)
\setminus D_4(P).
\tag{8.11}
\]
\end{theorem}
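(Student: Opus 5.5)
The proof splits (8.11) into three memberships. The first two are quick and the third carries the argument.

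\emph{Membership in \(\gamma_3(P)\).} Lemma~\ref{lemma-7.4-product-lower-central} gives \(h\in\gamma_3(P)\) directly, since each \(h_k\in\gamma_3(P_k)\) by (2.11).

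\emph{Membership in \(D_4^{\rm fin}(P)\).} By Lemma~\ref{lemma-7.4-product-lower-central} the lower central series of \(P\) projects onto that of each finite subproduct \(P^{(j)}\), so the series is induced from the tower as required by (1.28a). By Lemma~\ref{lemma-7.5-augmentation-powers}, \(A_{4,j}=\Delta(P^{(j)})^4\). Dimension subgroups of a finite direct product are the products of the factor dimension subgroups, because each projection and each inclusion of a factor is a ring retraction compatible with \(\Delta^4\). Hence \(q_j(h)=(h_k)_{k\le j}\in D_4(P^{(j)})\) by (2.11). Now (1.31) gives \(h\in D_4^{\rm fin}(P)\).

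\emph{Non-membership in \(D_4(P)\).} Argue by contradiction and suppose \(h\in D_4(P)\). Note that \(P\) has class three, since \(\gamma_4(P)\subseteq\prod\gamma_4(P_k)=1\). Corollary~\ref{corollary-2.3prime-theta-surjective} therefore applies to \(X=P\) and yields \(\xi\in K_P\subseteq\mathsf A\widehat*\mathsf A\), with \(\mathsf A=P_{\rm ab}\), such that \(\Theta_P(\xi)=h\).

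We next bound the exponents uniformly. Lemma~\ref{lemma-8.2-uniform-exponent-bound} gives one \(M\) such that, for every finite abelian \(2\)-group quotient \(Q\) of \(\mathsf A\), the image of \(\xi\) lies in \(\mathcal T_M(Q)\). Take \(Q=\mathsf A_k=(P_k)_{\rm ab}\), which is a quotient of \(\mathsf A\) through the projection \(P\to P_k\). By Lemma~\ref{lemma-2.6-hmp-tahara-compatibility}(3), the image \(\xi_k\) of \(\xi\) lies in \(K_{P_k}\) and satisfies \(\Theta_{P_k}(\xi_k)=h_k\). Corollary~\ref{corollary-7.3-odd-coordinate} then says the \(g^{(k)}_{13}\)-coordinate of \(\xi_k\) is odd.

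For \(k>M\), however, (8.8) forces this coordinate into \(2C_{2^k}\), so it is even. This contradicts oddness, and hence \(h\notin D_4(P)\).

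The main obstacle is the naturality step. We must confirm that the map \(\mathsf A\widehat*\mathsf A\to\mathsf A_k\widehat*\mathsf A_k\) carries the element produced by Lemma~\ref{lemma-8.2-uniform-exponent-bound} to \(\xi_k\). This needs the abelianization of the product to map compatibly, since \(P_{\rm ab}=\prod(P_k)_{\rm ab}\) by (7.7). We must also check that the pair coordinates are read in the same splitting (2.8) that Corollary~\ref{corollary-7.3-odd-coordinate} uses. Both Lemma~\ref{lemma-8.1-pair-coordinate-divisibility} and Lemma~\ref{lemma-2.6-hmp-tahara-compatibility} use the invariant-factor basis \(e_1,\dots,e_4\) of (2.9), so the parities agree.
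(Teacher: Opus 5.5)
Your proposal is correct and follows the paper's proof step for step. Lemma~\ref{lemma-7.4-product-lower-central} gives \(h\in\gamma_3(P)\); functoriality of \(D_4\) under the factor inclusions \(P_k\to P^{(j)}\) handles the finite levels; and Corollary~\ref{corollary-2.3prime-theta-surjective}, Lemma~\ref{lemma-8.2-uniform-exponent-bound}, naturality and the parity clash between (8.8) and Corollary~\ref{corollary-7.3-odd-coordinate} for \(k>M\) give \(h\notin D_4(P)\). The paper additionally checks that the tower of finite subproducts is cofinal among continuous finite quotients, which places \(P\) in the setting of Section~\ref{finite-quotient-saturation} but is not needed for (8.11). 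The naturality step you flag as the main obstacle is immediate, since \(\xi_k\) is by definition the image of \(\xi\) under the same map \(P_{\rm ab}\to\mathsf A_k\) that Lemma~\ref{lemma-8.2-uniform-exponent-bound} uses.
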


\begin{proof}
The tower (8.10a) is cofinal among the continuous finite quotients of
\(P\): the open kernel of a homomorphism from \(P\) to a finite discrete
group contains \(\prod_{k>j}P_k\) for some \(j\), so the homomorphism
factors through \(P^{(j)}\).  By
Lemmas~\ref{lemma-7.4-product-lower-central}
and~\ref{lemma-7.5-augmentation-powers} the lower central series of \(P\)
is closed and coordinatewise, and \(A_4(P,\gamma_\bullet)=\Delta(P)^4\).

Lemma~\ref{lemma-7.4-product-lower-central} gives \(h\in\gamma_3(P)\).
Each \(h_k\) lies in \(D_4(P_k)\), and the inclusion \(P_k\to P^{(j)}\)
carries \(D_4(P_k)\) into \(D_4(P^{(j)})\).  Hence the image
\((h_2,\ldots,h_j)\) of \(h\) lies in \(D_4(P^{(j)})\) for every \(j\), that
is, \(h\in D_4^{\rm fin}(P)\).

Assume \(h\in D_4(P)\). The product has nilpotency class at most three,
so \(\gamma_4(P)=1\). Corollary~\ref{corollary-2.3prime-theta-surjective} supplies
\[
\xi\in K_P\subseteq P_{\rm ab}\widehat *P_{\rm ab}
\quad\text{with}\quad
\Theta_P(\xi)=h.
\tag{8.12}
\]
Choose \(M\) as in Lemma~\ref{lemma-8.2-uniform-exponent-bound} and project to \(P_k\). Then
\[
\xi_k\in K_{P_k}\cap\mathcal T_M(\mathsf A_k),
\qquad
\Theta_{P_k}(\xi_k)=h_k.
\tag{8.13}
\]
Choose \(k>M\). Lemma~\ref{lemma-8.1-pair-coordinate-divisibility} says that the \((1,3)\)-coordinate of
\(\xi_k\) is even, while Corollary~\ref{corollary-7.3-odd-coordinate} says it is odd. This
contradiction proves \(h\notin D_4(P)\).
\end{proof}

\begin{corollary}\label{corollary-8.4-finite-subproduct-width-divergence}
The finite-level Losey widths of \(h\) are unbounded: with
\(h^{(j)}=(h_2,\ldots,h_j)\in P^{(j)}\),
\[
\sup_{j\ge2}
\mu_3\!\left(P^{(j)},\gamma_\bullet;h^{(j)}\right)=\infty.
\tag{8.14}
\]
\end{corollary}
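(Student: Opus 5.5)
This follows by combining Theorem~\ref{theorem-8.3-failure-of-discrete-descent} with the bounded-width criterion of Theorem~\ref{theorem-6.1-discrete-descent-bounded-width}. The plan is to verify that the standing hypotheses of Section~\ref{finite-quotient-saturation} hold for \(P\) with its lower central series and the tower (8.10a), and then argue by contraposition.

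First, the setup. \(P=\prod_{k\ge2}P_k\) is a countably based profinite group, and the finite subproducts \(P^{(j)}\) form a descending cofinal tower with surjective transition maps; cofinality was checked at the start of the proof of Theorem~\ref{theorem-8.3-failure-of-discrete-descent}. By Lemma~\ref{lemma-7.4-product-lower-central}, \(\gamma_s(P)=\prod_k\gamma_s(P_k)\) for \(s\le3\), and \(\gamma_4(P)=1=\prod_k\gamma_4(P_k)\) since each \(P_k\) has class three. Hence each \(\gamma_s(P)\) is closed and equals \(\varprojlim_j q_j(\gamma_s(P))\). We also need \(q_j(\gamma_s(P))=\gamma_s(P^{(j)})\), which holds because the lower central series is compatible with surjections. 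So (1.28a) holds.

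Next, the degree bookkeeping. By Lemma~\ref{lemma-7.5-augmentation-powers}, \(A_m(P^{(j)},\gamma_\bullet)=\Delta(P^{(j)})^m\) and \(A_m(P,\gamma_\bullet)=\Delta(P)^m\). Therefore \(D_4^{\rm alg}(P,\gamma_\bullet)=D_4(P)\), and the finite-level saturation is the \(D_4^{\rm fin}(P)\) of Theorem~\ref{theorem-8.3-failure-of-discrete-descent}. Take \(n=3\). That theorem gives \(h\in H_3\cap D_4^{\rm fin}\) with \(h\notin D_4^{\rm alg}\). The finite-level widths are \(\mu_{3,j}(h)=\mu_3(P^{(j)},\gamma_\bullet;h^{(j)})\) as in (6.5), and they are finite because \(h^{(j)}\in D_4(P^{(j)})\).

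Finally, apply (6.7) with \(n=3\). If \(\sup_j\mu_{3,j}(h)\) were finite, Theorem~\ref{theorem-6.1-discrete-descent-bounded-width} would give \(h\in D_4^{\rm alg}=D_4(P)\), contradicting (8.11). Hence the supremum is infinite, which is (8.14). By (6.6) the sequence is monotone, so the widths in fact tend to infinity.

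The only step needing care is the hypothesis check (1.28a), in particular that the inverse limit of the images \(\gamma_s(P^{(j)})\) is exactly \(\gamma_s(P)\) and not merely its closure. Lemma~\ref{lemma-7.4-product-lower-central} settles this, because it shows \(\gamma_s(P)\) is already the full product. Everything else is direct invocation.
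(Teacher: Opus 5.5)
Your proof is correct and is the paper's argument: assume the widths are bounded, apply Theorem~\ref{theorem-6.1-discrete-descent-bounded-width} with \(n=3\) to get \(h\in D_4^{\rm alg}(P,\gamma_\bullet)=D_4(P)\), and contradict Theorem~\ref{theorem-8.3-failure-of-discrete-descent}. Your explicit check of (1.28a) is a useful addition. In particular, \(q_j(\gamma_s(P))=\gamma_s(P^{(j)})\) makes \(\mu_{3,j}(h)\) literally equal to \(\mu_3(P^{(j)},\gamma_\bullet;h^{(j)})\), a point the paper only records implicitly at the start of the proof of Theorem~\ref{theorem-8.3-failure-of-discrete-descent}.
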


\begin{proof}
If the widths were bounded, Theorem~\ref{theorem-6.1-discrete-descent-bounded-width} would produce one discrete
expression for \(h-1\), putting \(h\) in \(D_4(P)\), contrary to Theorem~\ref{theorem-8.3-failure-of-discrete-descent}.
\end{proof}

For the tower (8.10a), formula (5.5) and
Theorem~\ref{theorem-8.3-failure-of-discrete-descent} give
\[
\Omega_3
\cong
\frac{\gamma_3(P)\cap D_4^{\rm fin}(P)}
     {\gamma_3(P)\cap D_4(P)}
\ne0.
\tag{8.15}
\]

\end{document}